\documentclass[11pt]{amsart}
\usepackage[utf8]{inputenc}
\usepackage[margin=3cm]{geometry}
\usepackage{amsmath,amssymb,amsthm}
\usepackage{enumerate}
\usepackage{graphicx}
\usepackage[english]{babel}
\usepackage[pdfborder={0 0 0}]{hyperref}
\usepackage[utf8]{inputenc}
\usepackage{verbatim}
\usepackage{color}
\usepackage{chngcntr}
\usepackage{pgfplots}
\pgfplotsset{compat=1.15}
\usepackage{mathrsfs}
\usetikzlibrary{arrows}

\usepackage{cleveref}
\usepackage{hyperref}

\newtheorem{theorem}{Theorem}[section]
\newtheorem*{theorem*}{Theorem}
\newtheorem{proposition}[theorem]{Proposition}

\newtheorem{lemma}[theorem]{Lemma}
\newtheorem{conjecture}[theorem]{Conjecture}
\newtheorem*{conjecture*}{Conjecture}
\newtheorem{fact}[theorem]{Fact}
\newtheorem{question}[theorem]{Question}
\newtheorem*{question*}{Question}
\newtheorem{claim}[theorem]{Claim}
\theoremstyle{definition}

\newcommand{\F}{\mathcal{F}}
\newcommand{\A}{\mathcal{A}}
\newcommand{\B}{\mathcal{B}}

\newcommand{\Z}{\mathbb{Z}}

\DeclareMathOperator{\La}{La}
\DeclareMathOperator{\mad}{D}
\newcommand{\Ssize}{s}
\newcommand{\comporder}{t}
\newcommand{\compsize}{m}

%\title{Condensation in the context of Sperner's theorem}
\title{Largest component in Boolean sublattices}
\thanks{This work was carried out in the context of the master's thesis research of JG, see~\protect\url{https://scripties.uba.uva.nl/search?id=record_54316}. RJK was partially supported by grant OCENW.M20.009 and the Gravitation Programme NETWORKS (024.002.003) of the Dutch Research Council (NWO)}
\author{
Julian Galliano}
%\address{Korteweg--de Vries Institute for Mathematics, University of Amsterdam, Netherlands.}
%\and 
\author{
Ross J. Kang%\footnotemark[1]
}
\address{Korteweg--de Vries Institute for Mathematics, University of Amsterdam, Netherlands. } 
\email{
%julian.galliano@student.uva.nl,
%r.kang@uva.nl}
\protect\href{mailto:julian.galliano@student.uva.nl }{\protect\nolinkurl{julian.galliano@student.uva.nl}},\protect\href{mailto:r.kang@uva.nl}{\protect\nolinkurl{r.kang@uva.nl}}
}

\begin{document}

\begin{abstract}
For a subfamily $\F\subseteq 2^{[n]}$ of the Boolean lattice, consider the graph $G_\F$ on $\F$ based on the pairwise inclusion relations among its members. Given a positive integer $\comporder$, how large can $\F$ be before $G_\F$ must contain some component of order greater than $\comporder$?
For $\comporder=1$, this question was answered exactly almost a century ago by Sperner: the size of a middle layer of the Boolean lattice. For $\comporder=2^n$, this question is trivial. We are interested in what happens between these two extremes.
For $\comporder=2^{g}$ with $g=g(n)$ being  any integer function that satisfies $g(n)=o(n/\log n)$ as $n\to\infty$, we give an asymptotically sharp answer to the above question: not much larger than the size of a middle layer.
This constitutes a nontrivial generalisation of Sperner's theorem.
We do so by a reduction to a Tur\'an-type problem for rainbow cycles in properly edge-coloured graphs.
Among other results, we also give a sharp answer to the question, how large can $\F$ be before $G_\F$ must be connected?
\end{abstract}

%AMS MSC codes: 05D05, 06A07, 05C35
%keywords: posets, Sperner's theorem, supersaturation, extremal set theory

\date{17 March 2025; this is the Author Accepted Manuscript (AAM) for {\em Acta Mathematica Hungarica}}
%\date{\today}
\maketitle

%\tableofcontents

\section{Introduction}

Sperner's theorem~\cite{Spe28} is a revered result in combinatorics that
gives a precise answer to the following problem.
Find the largest number $\La(n)$ such that there exists some family $\F\subseteq 2^{[n]}$ of subsets of the $n$-element ground set $[n]=\{1,\dots,n\}$ no two of which are comparable, i.e.~there are no $A,B\in\F$ for which either $A\subseteq B$ or $B\subseteq A$, satisfying that $|\F|=\La(n)$. 
Equivalently, find the smallest number $\La(n)$ such that in any family $\F\subseteq 2^{[n]}$ of subsets of $[n]$ with at least $\La(n)+1$ members there must be a pair $A,B\in \F$ of comparable members, so $A\subseteq B$ or $B\subseteq A$, otherwise known as a {\em $2$-chain}.
In this context, $2^{[n]}$ is often referred to as the {\em Boolean lattice}.

For our purposes, we prefer to cast $\La(n)$ as a {\em threshold}. There is some threshold value $\La(n)$ such that in any family $\F\subseteq 2^{[n]}$ of subsets of $[n]$ with at least $\La(n)+1$ members there must be a $2$-chain, while, on the other hand, there exists some family $\F\subseteq 2^{[n]}$ of subsets of $[n]$ with only $\La(n)$ members that contains no $2$-chain.
Sperner's theorem~\cite{Spe28} asserts that this threshold satisfies $\La(n)=\binom{n}{\lfloor n/2 \rfloor}=\binom{n}{\lceil n/2 \rceil}$. The extremal family is $\binom{[n]}{\lfloor n/2 \rfloor}$, all subsets of $[n]$ of size $\lfloor n/2 \rfloor$, or, if $n$ is odd, the symmetric one $\binom{[n]}{\lceil n/2 \rceil}$, the so-called {\em middle layer(s)}.

It has long been known that something remarkable happens at this threshold called {\em supersaturation}. Confirming a conjecture of Erd\H{o}s and Katona, Kleitman~\cite{Kle68} showed that in any family $\F\subseteq 2^{[n]}$ of subsets of $[n]$ with at least $\La(n)+q$ members there must be at least $q(\lfloor\frac{n}{2}\rfloor+1)$ $2$-chains. The extremal family here takes subsets of $[n]$ of sizes as close as possible to $n/2$.
(See~\cite{DGKS14,DGS15,BaWa18,Sam19} for the recent generalisation of this phenomenon to $\ell$-chains, $\ell\ge 3$.)

In other words, as we cross the threshold, not only must there be one $2$-chain, but also there must be {\em many}.
A natural question to ask is, what does the space of $2$-chains look like? Since by supersaturation the volume of this space must be large, must it exhibit some clustering? When we demand the family to be of size at least $2^n$, the space of $2$-chains is trivially connected, so more fully we ask, what happens between the threshold $\La(n) \sim 2^{n+1/2}/\sqrt{\pi n}$ and the extreme $2^n$?
Does a large cluster in the space appear suddenly? Or more gradually?

These musings preoccupy us here.
Since these questions are subject to interpretation, we need to ground our definitions.
Given a family $\F\subseteq 2^{[n]}$, we define an auxiliary graph $G_{\F}$ with vertex set $\F$ and two vertices $A,B\in \F$ are adjacent if $A,B$ comprise a $2$-chain. Of course, if $\F = 2^{[n]}$, then $G_\F$ is a complete graph on $2^n$ vertices.
We focus on the following novel prospect.

\begin{conjecture}
\label{main conjecture}
Let $k,n$ be integers with $0\le k\le n$ such that $k=0$, $k=1$ or $k$ has the same parity as $n$. If for some $\F\subseteq 2^{[n]}$ the components of $G_\F$ have order at most $2^k$, then
    \[
    |\F|\le 2^k\binom{n-k}{\lfloor (n-k)/2\rfloor}.
    \]
\end{conjecture}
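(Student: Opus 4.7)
The plan is to mimic Sperner's proof (in its LYM / symmetric-chain-decomposition form) while building in the bounded-component constraint. Note first that the conjectured bound is achieved by fixing a $k$-set $K\subseteq[n]$ and taking $\F_K=\{A\cup B : A\subseteq K,\, B\in \binom{[n]\setminus K}{\lfloor(n-k)/2\rfloor}\}$: each fibre $\{A\cup B_0 : A\subseteq K\}$ is a component of $G_{\F_K}$ of order $2^k$, since the ``hub'' $B_0$ is comparable to every set in its fibre, while distinct fibres are incomparable as the $B_0$'s form an antichain in $2^{[n]\setminus K}$. The target is to show that no $\F$ beats this.

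A first, weak bound comes for free: distinct components of $G_\F$ are pairwise incomparable (any inter-component comparability would be an edge of $G_\F$), so picking one representative per component yields an antichain in $2^{[n]}$. Combined with $|C_i|\le 2^k$ this already gives $|\F|\le 2^k\binom{n}{\lfloor n/2\rfloor}$, but this exceeds the target by a factor of order $2^k$. To sharpen, I would use a random maximal chain argument: for a uniform random maximal chain $P$ in $2^{[n]}$, the intersection $P\cap \F$ is totally ordered and so lies in a single component, giving $|P\cap\F|\le\max_i|C_i|\le 2^k$ and hence the weighted inequality $\sum_{A\in \F}1/\binom{n}{|A|}\le 2^k$. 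The extremal $\F_K$ realises this LYM sum at only $\approx\sqrt{n/(n-k)}$, essentially $1$, so the chain bound is still far too lossy and a finer argument is needed.

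To close the gap I would attempt a compression/shifting followed by induction on $n$ in steps of two within a fixed parity class of $n-k$. The parity restriction in the statement is indispensable: a single-step induction $n\mapsto n-1$ would require $2\binom{n-1-k}{\lfloor(n-1-k)/2\rfloor}\le \binom{n-k}{\lfloor(n-k)/2\rfloor}$, and one checks that this fails when $n-k$ is odd and holds with equality when $n-k$ is even. The compression step should push $\F$ towards a canonical form consisting of full $k$-subcubes aligned with a common coordinate set $K$, supported over an antichain $\{B_i\}\subseteq 2^{[n]\setminus K}$; Sperner's theorem applied inside $2^{[n]\setminus K}$ then gives $|\{B_i\}|\le \binom{n-k}{\lfloor(n-k)/2\rfloor}$, and hence $|\F|=\sum_i|C_i|\le 2^k|\{B_i\}|$, as claimed. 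The base cases $k=0$ (Sperner verbatim) and $k=1$ would be treated directly.

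The main obstacle is the compression step. Standard left-shifting preserves the level profile (hence $|\F|$) and the antichain property, but it can merge or reshape components of $G_\F$, and it is not at all transparent that the cap ``all components have order at most $2^k$'' survives. Designing a compression that maintains this constraint, or, failing that, finding a direct double-counting or entropy argument tailored to the comparability graph, is the technical crux. The paper's actual theorem, which delivers the conjectured bound asymptotically for $k=o(n/\log n)$ via a reduction to a Tur\'an-type problem for rainbow cycles in properly edge-coloured graphs, suggests that the exact conjecture likely requires a genuinely new combinatorial idea beyond the standard Sperner toolkit.
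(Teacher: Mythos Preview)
The statement you are attempting is a \emph{Conjecture} in the paper, not a theorem: the paper does not prove it in full and explicitly calls it ``difficult''. What the paper actually establishes is (i) the asymptotic bound $\La(n,2^k)=(1+o(1))\La(n)$ for $k=o(n/\log n)$, via the Lubell-function approach you sketch, sharpened by a Tur\'an-type bound on rainbow cycles to show that the average number of meetings of a random maximal chain with $\F$ is $1+o(1)$ rather than merely $\le 2^k$; and (ii) the exact cases $k=1$ and $k=2$ (the latter only for even $n$), via a ``diamond BLYM'' inequality that counts, for each component, the permutations meeting it at least once, combined with a case analysis showing that for components of order at most $4$ the diamond shape minimises this count.

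Your proposal is, by your own admission, not a proof. The weak representative-per-component bound and the LYM bound $\Lambda(n,\F)\le 2^k$ are both correct and both appear, in equivalent form, in the paper's preliminaries; your remark that the extremal family has Lubell value close to $1$ is exactly the observation driving the paper's asymptotic argument. But the heart of your plan---compress until every component is a full $k$-subcube aligned with a common coordinate set $K$, then apply Sperner inside $2^{[n]\setminus K}$---has no argument behind its key step, and you yourself identify the obstacle: standard shifts need not preserve the component-order cap. Nothing in the paper suggests this compression route can be salvaged; its exact results go instead through the permutation-counting ``diamond BLYM'' machinery, and even that only reaches $k\le 2$. You have written a fair survey of the difficulties, but it is not a proof, and should not be presented as one.
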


\noindent
Note that when $k=0$, then the statement is Sperner's theorem.
The $k=1$ case is corollary of a result of Katona and Tarj\'an~\cite{KaTa83}.
The $k=n$ case corresponds to the complete case $\F=2^{[n]}$, so trivially holds.
A fuller confirmation of the cases $1<k<n$ ---but especially those $k=k(n)$ for which $k=o(n)$ as $n\to\infty$--- would constitute a natural generalisation/extension of Sperner's theorem.

(By Stirling's approximation, note that when $n-k\to\infty$, then for all even $n-k$ we have
\[
2^k\binom{n-k}{\lfloor (n-k)/2\rfloor} \sim 2^k \frac{2^{n-k}}{\sqrt{(n-k)\pi/2}} = \frac{2^n}{\sqrt{(n-k)\pi/2}}
\]
and that this is asymptotically equal to $\La(n)$ when $k=o(n)$ as $n\to\infty$.)

This conjecture seems difficult, but, if it were true, it would be best possible, essentially by a disjoint union of complete subfamilies.
\begin{proposition}\label{prop:sharp}
    For all integers $k,n$ with $0\le k\le n$, there exists a family $\F\subseteq 2^{[n]}$ of size $2^k\binom{n-k}{\lfloor (n-k)/2\rfloor}$ such that the components of $G_\F$ have order $2^k$.
\end{proposition}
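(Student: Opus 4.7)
The plan is a straightforward product construction based on Sperner's theorem. Partition $[n]$ into $[k]$ and $\{k+1,\dots,n\}$, and, by Sperner's theorem, fix an antichain $\A \subseteq 2^{\{k+1,\dots,n\}}$ of size $\binom{n-k}{\lfloor (n-k)/2 \rfloor}$. I would then take
\[
\F = \bigl\{ A \cup B : A \in \A,\ B \subseteq [k] \bigr\}.
\]
Since every member of $\A$ is disjoint from $[k]$, the decomposition $A \cup B$ is unique (recover $A$ by intersecting with $\{k+1,\dots,n\}$), so $|\F| = 2^k \binom{n-k}{\lfloor (n-k)/2 \rfloor}$ as required.

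Next I would identify the components of $G_\F$. For each $A \in \A$ let $\C_A = \{A \cup B : B \subseteq [k]\}$, a family of $2^k$ members. Since $A \in \C_A$ is a subset of every other element of $\C_A$, every vertex of $\C_A$ is adjacent in $G_\F$ to $A$, so $\C_A$ induces a connected subgraph of $G_\F$ of order $2^k$.

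Finally I would check that no edge of $G_\F$ runs between $\C_{A_1}$ and $\C_{A_2}$ for distinct $A_1, A_2 \in \A$. Indeed, if $A_1 \cup B_1 \subseteq A_2 \cup B_2$, then intersecting both sides with $\{k+1,\dots,n\}$ gives $A_1 \subseteq A_2$; as $\A$ is an antichain, $A_1 = A_2$, a contradiction. Hence the $\C_A$'s partition $\F$ into components each of order exactly $2^k$.

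I do not anticipate any obstacle: the proposition merely formalises the observation that $2^{[n]}$ factors as $2^{[k]} \times 2^{\{k+1,\dots,n\}}$ and that tensoring an antichain in the second factor with the full first factor produces disjoint Boolean "blocks" of size $2^k$, matching the bound in Conjecture~\ref{main conjecture}.
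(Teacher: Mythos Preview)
Your proof is correct and follows essentially the same construction as the paper: partition $[n]$ into a block of size $k$ and a block of size $n-k$, take a maximum antichain in the large block (the paper uses the explicit middle layer $\binom{[n-k]}{\lfloor(n-k)/2\rfloor}$ rather than invoking Sperner), and attach all $2^k$ subsets of the small block to each antichain element. The verification that the resulting $\C_A$'s are the connected components is the same argument in both.
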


\begin{proof}
    Consider the family described as follows. Let $\F_s=\binom{[n-k]}{\lfloor(n-k)/2\rfloor}$, i.e.~the set of all $\lfloor(n-k)/2\rfloor$-element subsets of $[n-k]$. Then let $\F$ be the family of all sets $F$ such that $F_s\subseteq F\subseteq F_s\cup([n]\backslash[n-k])$ for some $F_s\in\F_s$. 
    Then $G_\F$ is made up of exactly $\binom{n-k}{\lfloor (n-k)/2\rfloor}$ components, each of which is identified by a unique collection of $\lfloor (n-k)/2\rfloor$ elements from the set $[n-k]$. Also note that each component has order $2^k$, since each element of a component is determined by a unique subset of $[n]\backslash[n-k]$, of which there are $2^k$. It only remains to show that any two components are disconnected. For this, assume we have $X=F_X\cup X'$ and $Y=F_Y\cup Y'$, with $F_X,F_Y\in\F_s$, and suppose $X\subseteq Y$. Then
    \(
    F_X\subseteq X\subseteq Y\subseteq F_Y\cup([n]\backslash[n-k]).
    \)
    But this is a contradiction if $F_X\ne F_Y$, since then $F_X$ contains some element not contained in $F_Y$, and since $F_X\subseteq [n-k]$, this element cannot be contained in $F_Y\cup ([n]\backslash[n-k])$ either.
\end{proof}
Note these constructions are quite distinct from the extremal families for supersaturation.

For the discussion that follows, let us denote by $\La(n,\comporder)$ the size of the largest family $\F\subseteq 2^{[n]}$ such that all components of $G_\F$ have order at most $\comporder$. So $\La(n,1)=\La(n)$, and \Cref{main conjecture} and \Cref{prop:sharp} posit that $\La(n,2^k) = 2^k\binom{n-k}{\lfloor (n-k)/2\rfloor}$ if $k=0$, $k=1$, or $0\le k\le n$ has the same parity as $n$.
By definition, if a family $\F\subseteq 2^{[n]}$ has at least $\La(n,\comporder)+1$ elements, then $G_\F$ must have a component of order at least $\comporder+1$.
Therefore, understanding the behaviour of $\La(n,\comporder)$ does indeed give us some idea, at least in one interpretation, of how the space of $2$-chains looks as we tune the prescribed number of elements between $\La(n)+1$ and $2^n$, just as we asked above.

In our main result, we establish for a large range of choices of $\comporder$ the asymptotic behaviour of $\La(n,\comporder)$, which aligns with \Cref{main conjecture} and \Cref{prop:sharp}.

\begin{theorem}
    \label{main}
    Let $\comporder=\comporder(n)=O(2^{g(n)})$ for some $g(n)$ that satisfies $g(n)=o(n/\log n)$ as $n\to\infty$. Then $\La(n,\comporder) = (1+o(1)) \La(n)$.
\end{theorem}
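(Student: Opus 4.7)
The plan is to combine a LYM-type inequality with a reduction to a Tur\'an-type bound for rainbow cycles in properly edge-coloured graphs. The first and easy step exploits the component hypothesis via chains: any maximal chain $\mathcal{C}$ in $2^{[n]}$ meets $\F$ in a totally-ordered set, which is a clique in $G_\F$ and therefore lies in a single component, so $|\F\cap\mathcal{C}|\le\comporder$. Double-counting incidences between $\F$ and maximal chains yields the LYM-style bound $\sum_{A\in\F}\binom{n}{|A|}^{-1}\le\comporder$; this immediately gives $|\F|\le\comporder\cdot\La(n)$, of the correct order of magnitude but off by a factor $\comporder$, and it also lets one reduce to a subfamily $\F'\subseteq\F$ whose members have sizes in a narrow band around $n/2$, the sets outside contributing only $o(\La(n))$.

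The heart of the argument is the construction of an auxiliary graph $H$. I would fix a symmetric chain decomposition $\{C_m\}$ of $2^{[n]}$, indexed by the middle layer $\binom{[n]}{\lfloor n/2\rfloor}$, take $V(H)=\binom{[n]}{\lfloor n/2\rfloor}$, and for every 2-chain $A\subsetneq B$ in $\F'$ with $A\in C_m$, $B\in C_{m'}$, $m\ne m'$, add an edge $\{m,m'\}$. Each edge is coloured according to a canonical rule attached to the 2-chain (for instance, by the lower element $A$ together with some tie-breaking datum) so that each colour class forms a matching, i.e.~the edge-colouring is proper. The design is arranged so that a rainbow cycle in $H$ encodes a closed sequence of cross-chain comparabilities in $\F'$ which, together with the cliques of each $\F'\cap C_m$ visited along the cycle, assembles a connected subgraph of $G_\F$ of order strictly greater than $\comporder$---contradicting the hypothesis.

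A Tur\'an-type theorem for rainbow cycles---that a properly edge-coloured graph on $N$ vertices with more than $N\cdot(\log N)^{O(1)}$ edges must contain a rainbow cycle---then forces $|E(H)|\le\La(n)\cdot(\log\La(n))^{O(1)}$. Translating back through the chain decomposition caps the number of cross-chain comparabilities in $\F'$, and combining with the LYM step of paragraph one pins down $|\F|\le(1+o(1))\La(n)$. The main obstacle will be the construction in paragraph two: designing the colouring so that it is simultaneously proper \emph{and} so that rainbow cycles genuinely encode large connected pieces of $G_\F$, with the combinatorial bookkeeping needed to unpack such a cycle, together with the within-chain cliques, into an actual $G_\F$-component of order exceeding $\comporder$. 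Finally, the hypothesis $g(n)=o(n/\log n)$ is essentially pinned by the $(\log N)^{O(1)}$ slack in the available rainbow-cycle Tur\'an threshold: the admissible range $\comporder=O(2^{g(n)})$ of component orders is precisely what that polylogarithmic factor can absorb.
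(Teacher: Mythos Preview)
Your plan applies the rainbow-cycle Tur\'an bound at the wrong scale, and the central implication you rely on---that a rainbow cycle in $H$ forces a component of $G_\F$ of order exceeding $\comporder$---is not true as stated and you do not indicate how to make it true. A rainbow triangle in your $H$ involves three symmetric chains joined by three cross-chain $2$-chains; the connected piece of $G_\F$ this assembles has order perhaps half a dozen, not $\comporder+1$. Nothing in your colouring sketch (``colour by the lower element $A$ plus tie-breaking'') forces rainbow cycles to be long, and the component hypothesis gives no handle on \emph{short} rainbow cycles. Even granting a bound $|E(H)|\le \La(n)\cdot(\log \La(n))^{O(1)}=\La(n)\cdot n^{O(1)}$ on cross-chain comparabilities, you have not said what quantity this caps that feeds back into $|\F|$; ``combining with the LYM step'' of paragraph one only gives $|\F|\le \comporder\,\La(n)$, which is off by the full factor $\comporder$.

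The paper uses the rainbow-cycle bound at the scale of a single component, not of the middle layer, and the absence of rainbow cycles comes from the lattice structure rather than from the component hypothesis. One first reduces to a \emph{skipless} family (any set sandwiched between two members of $\F$ is itself in $\F$), so that a maximal chain meeting a component $C$ more than once must traverse an edge of $G'_\F$ between two \emph{consecutive} layers $C_k,C_{k+1}$. That bipartite graph has at most $|C|\le \comporder$ vertices, and colouring each edge $(A,A\cup\{i\})$ by $i$ yields a proper colouring with no rainbow cycle whatsoever---a structural fact about $2^{[n]}$, independent of $\F$. Hence its average degree is at most $\mad^\star(\comporder,\mathcal{O}^\star)=O(\log \comporder\log\log \comporder)$. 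Feeding this edge bound into a refined Lubell count of chains meeting $C$ more than once gives $\overline{\Lambda}(n,C)\le 1+O(\mad^\star(\comporder,\mathcal{O}^\star)/n)$, which is $1+o(1)$ precisely under $g(n)=o(n/\log n)$; averaging over components then yields $\La(n,\comporder)\le(1+o(1))\La(n)$. The missing idea in your outline is this per-component edge-density bound and its insertion into the chain-count, in place of a global auxiliary graph on $\La(n)$ vertices.
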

\noindent
What \Cref{main} says is that even if we allow for the components in $G_\F$ to be quite large as a function of $n$, the family $\F$ may not grow much larger than a middle layer family.
So our theorem is a qualitative strengthening of Sperner's theorem.

The main idea in the proof is to relate the size of the family to the average number of times a permutation $\sigma\in S_n$ meets elements of the family. In our proof, we draw an unexpected connection to a Tur\'an-type problem, namely, an upper bound on the edge-density of a graph on two consecutive layers of the Boolean lattice. To obtain this upper bound, we invoke a recent result of Alon, Buci\'c, Sauermann, Zakharov, Zamir~\cite{ABSZZ+} concerning rainbow cycles in proper edge-colourings.

We also pursue further partial, but exact results in support of \Cref{main conjecture}. In particular, we are interested in problems near the boundary cases $k=0$ and $k=n$. 
We have verified that the conjecture holds if $k=1$ or $k=2$ (and $n$ is even).
In doing so, we have developed a slight generalisation of the BLYM inequality (\Cref{diamond BLYM} below) which implies \Cref{main conjecture} conditional upon the hypothetical shape of the components in $G_\F$ of an optimal family $\F$. 
(We see this same shape in \Cref{prop:sharp}.)
This generalisation of the BLYM inequality may be of independent interest.
Since the $k=n$ case corresponds to $G_\F$ being trivially connected, we found it interesting to pursue the largest size of $\F$ before $G_\F$ must be connected.

\begin{theorem}
\label{disconnected}
If $\F\subseteq 2^{[n]}$ is such that $G_\F$ is disconnected, then
\[
|\F| \le
\begin{cases}
2^n-2^{n/2+1}+2 & \text{ if $n$ is even, and }\\
2^n-3\cdot 2^{(n-1)/2}+2 & \text{ if $n$ is odd}
\end{cases}
\]
Moreover, these bounds on $|\F|$ are sharp.
\end{theorem}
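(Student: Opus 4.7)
The plan is to establish matching achievability and upper bound. For achievability, I will fix $S \subseteq [n]$ with $|S| = \lfloor n/2 \rfloor$ and set $\F := \{S\} \cup \{X \subseteq [n] : X \not\subseteq S \text{ and } X \not\supseteq S\}$. Every element of $\F \setminus \{S\}$ is incomparable to $S$, so $S$ is an isolated vertex of $G_\F$ and the graph is disconnected; a direct count of the sets comparable to $S$ gives $|\F| = 2^n - 2^{|S|} - 2^{n-|S|} + 2$, matching the stated bound in either parity.

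For the upper bound, write $G := G_{2^{[n]}}$ for the full comparability graph, so $G_\F$ is the induced subgraph on $\F$. The crucial first step will be to observe that $V := 2^{[n]} \setminus \F$ is a vertex cut of $G$ whenever $G_\F$ is disconnected: if $A, B \in \F$ lie in distinct components of $G_\F$, then any $A$--$B$ path in $G$ internally contained in $\F$ would produce an $A$--$B$ walk in $G_\F$, contradicting the component structure. Hence $|V| \ge \kappa(G)$. A short computation gives $d_G(S) = 2^{|S|} + 2^{n-|S|} - 2$, minimised for $|S|$ closest to $n/2$, so $\delta(G) = 2^{\lceil n/2 \rceil} + 2^{\lfloor n/2 \rfloor} - 2$, which is precisely the claimed cut size. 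Combining with the elementary $\kappa(G) \le \delta(G)$, it will suffice to prove $\kappa(G) \ge \delta(G)$. By Menger's theorem, this reduces to showing that every two incomparable $u, v \in 2^{[n]}$ admit $\delta(G)$ internally vertex-disjoint $u$--$v$ paths in $G$.

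To exhibit the desired paths, I will set $I := u \cap v$, $U := u \cup v$, $D := u \setminus v$, and $E := v \setminus u$ (with $|D|, |E| \ge 1$). The common neighbors of $u$ and $v$ are exactly the sets $Z$ with $Z \subseteq I$ or $Z \supseteq U$ (neither equal to $u$ or $v$), yielding $2^{|I|} + 2^{n - |U|}$ internally-disjoint length-$2$ paths $u - Z - v$. The remaining $\delta(G) - 2^{|I|} - 2^{n - |U|}$ paths will have length three, pairing each uncommon down-neighbor $T \subsetneq u$ (with $T \not\subseteq I$) with a carefully chosen intermediate on the $v$-side, and symmetrically for uncommon up-neighbors of $u$. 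The main obstacle will be that the natural assignment $T \mapsto T \cup v$ is not injective across $T$ agreeing on $D$ but differing on $I$; those offending paths will be rerouted through an auxiliary element of $E$ to encode the $I$-coordinate, producing slightly longer but still internally-disjoint paths. A case analysis in terms of $|D|, |E|, |I|$ (tightest when $|u|, |v|$ are near $n/2$, where $d_G(u) = \delta(G)$ leaves no slack) should confirm that $\delta(G)$ internally vertex-disjoint paths are always attainable.
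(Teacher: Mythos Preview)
Your reduction is both correct and conceptually clean: since $G_\F = G_{2^{[n]}}[\F]$, disconnectedness of $G_\F$ means $2^{[n]}\setminus\F$ is a vertex cut of the full comparability graph $G := G_{2^{[n]}}$, whence $|\F|\le 2^n-\kappa(G)$. The sharpness construction is the same as the paper's, and your degree computation $\delta(G)=2^{\lceil n/2\rceil}+2^{\lfloor n/2\rfloor}-2$ matches the target bound. So the whole theorem is equivalent to the single assertion $\kappa(G)=\delta(G)$. Notice that the paper's argument, though phrased structurally via the families $\F^+,\F^-$ and Kruskal--Katona, is itself a proof of exactly this assertion; you have simply named it.

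Where your proposal becomes shaky is the Menger-style path construction you outline for $\kappa(G)\ge\delta(G)$. The assertion that the residual paths ``will have length three'' is not correct in general: for instance with $n=4$, $u=\{1,2\}$, $v=\{1,3\}$, the two uncommon neighbours $\{2\}$ and $\{1,2,4\}$ of $u$ have \emph{no} length-three continuation to $v$ avoiding the four common neighbours, and one is forced to length-four paths such as $u-\{2\}-\{2,3\}-\{3\}-v$. More seriously, the non-injectivity you flag for $T\mapsto T\cup v$ over the $I$-coordinate has a mirror image you do not mention: the map $S\mapsto S\cap v$ for uncommon \emph{up}-neighbours of $u$ collapses the $R$-coordinate (the part outside $u\cup v$) in exactly the same way, and when $S\cap E=\emptyset$ it even lands on the already-used common neighbour $I$. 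Your proposed fix of ``encoding the $I$-coordinate via an auxiliary element of $E$'' does not obviously resolve either issue, since $E\subseteq v$ is already absorbed into $T\cup v$. Getting a clean bijective scheme that handles all four parameters $|I|,|D|,|E|,|R|$ simultaneously is a genuine piece of work --- doable, but not the short case-check you suggest, and plausibly no easier than the paper's Kruskal--Katona route. If you want to pursue this line, I would recommend either (i) finding the connectivity of the Boolean comparability graph in the literature, or (ii) arguing the cut bound directly (as the paper does) rather than via explicit paths.
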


\subsection*{Structure of the paper}
In \Cref{sec:proof}, we give the proof of \Cref{main}.
In \Cref{sec:exact}, we give and discuss our generalisation of the BLYM inequality and demonstrate its use in the $k=2$ case of \Cref{main conjecture}.
We prove \Cref{disconnected} in \Cref{sec:connected}.

Before continuing, we introduce some notation and preliminary results that will be useful.

\subsection{Notation and preliminaries}\label{sub:prelim}

The {\em Boolean lattice}, often denoted merely by $2^{[n]}$, is the partially ordered set $(2^{[n]},\subseteq)$ made up of all subsets of $[n]=\{1\dots,n\}$, together with the inclusion relation. We are considering subsets $\F$ of the Boolean lattice. When talking about the Boolean lattice, we often use the term {\em layer}, which is all subsets of $[n]$ of a given size, say, $k$, denoted by $\binom{[n]}k$. We imagine the layers to be arranged bottom to top, with the empty set $\emptyset$ at the bottom and the full set $[n]$ at the top. 
We refer to the {\em height} of a given family $\F$, which is the span of the highest and lowest layers that contain sets in $\F$.

Two distinct sets $X,Y\in\F$ are {\em comparable} if either $X\subseteq Y$ or $Y\subseteq X$, otherwise they are {\em incomparable}. We call a tuple $X_1,\dots,X_k\in\F$ of distinct sets such that $X_1\subseteq\dots\subseteq X_k$ a {\em $k$-chain}, and if $k=n$ we call it a {\em maximal chain}. We denote by $G_\F$ the undirected graph which has as vertices the elements of $\F$ and an edge $XY$ for every comparable pair $X\subseteq Y$. We also work with a subgraph $G'_\F$ of $G_\F$ where we only take those edges $XY$ with $|Y|=|X|+1$. This paper concerns the connectivity and component structure of $G_\F$. We are sometimes ambiguous by the use of phrases like, ``the components of $\F$'' or ``$\F$ does not contain some substructure'', which should always be read as statements about $G_\F$.

We denote by $S_n$ the set of permutations of $[n]$. As it turns out, studying how the permutations in $S_n$ interact with the sets in the family $\F$ is useful for results about $G_\F$. Note that we can interchangeably use either permutations or maximal chains, since a permutation $\sigma$ corresponds to the maximal chain $(\emptyset,\{\sigma(1)\},\{\sigma(1),\sigma(2)\},\dots,[n])$. 
We will say that a permutation $\sigma$ {\em meets} a family $\F$ if the intersection of $\F$ and the maximal chain corresponding to $\sigma$ is nonempty.

Throughout, $\log n$ denotes the base-2 logarithm, while $\ln n$ denotes the natural logarithm.

In \Cref{sub:reduction,sub:xistar}, we discuss and make use of a classic result of Kruskal~\cite{Kru63} and Katona~\cite{Kat68}.

\begin{theorem}[\cite{Kru63,Kat68}]\label{thm:KrKa}
    Let $\F\subseteq \binom{[n]}k$ for some $k$. Suppose we write 
    \[
    |\F|=\binom{n_k}k+\binom{n_{k-1}}{k-1}+\dots+\binom{n_j}j
    \]
    for some $n_k\ge n_{k-1}\ge\dots\ge n_j\ge j$. There is a unique way to do this. Then call ${\mathcal S}_r$ the family of sets of size $k-r$ that are subsets of some element of $\F$. Then
    \[
    |{\mathcal S}_r|\ge\binom{n_k}{k-r}+\binom{n_{k-1}}{k-1-r}+\dots+\binom{n_j}{j-r}.
    \]
\end{theorem}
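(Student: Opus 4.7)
The plan is to prove the Kruskal--Katona theorem by the standard compression/shifting method. The key reformulation I would work with is the stronger claim that, among all families $\F \subseteq \binom{[n]}{k}$ of fixed cardinality $m$, the lower shadow $\partial \F := \{S \in \binom{[n]}{k-1} : S \subset F \text{ for some } F \in \F\}$ is minimized when $\F$ consists of the initial segment of length $m$ in the colex order on $\binom{[n]}{k}$, where $A <_{\text{colex}} B$ iff $\max(A \triangle B) \in B$. The $r=1$ shadow bound then drops out by computing the shadow of this initial segment explicitly from the cascade representation, and the general $r \ge 2$ statement follows by iteration.

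To prove the reformulation, for each pair $i < j$ in $[n]$ I would introduce the compression operator $C_{ij}$: replace any $F \in \F$ with $j \in F$, $i \notin F$ by $(F \setminus \{j\}) \cup \{i\}$ whenever the latter is not already in $\F$. The crucial combinatorial step is verifying $|\partial C_{ij}(\F)| \le |\partial \F|$, which amounts to exhibiting an injection from $\partial C_{ij}(\F) \setminus \partial \F$ into $\partial \F \setminus \partial C_{ij}(\F)$. Iterating these compressions produces a left-compressed family, and a secondary induction on $n$ (splitting $\F$ according to whether $n$ belongs to a given set) then shows that any left-compressed family has shadow at least as large as that of the colex initial segment of the same size. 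Given the unique cascade expansion $|\F| = \binom{n_k}{k} + \binom{n_{k-1}}{k-1} + \cdots + \binom{n_j}{j}$, the corresponding colex initial segment decomposes as the disjoint union of $\binom{[n_k]}{k}$, the family $\{F \cup \{n_k+1\} : F \in \binom{[n_{k-1}]}{k-1}\}$, and so on; its shadow can then be read off as $\binom{n_k}{k-1} + \binom{n_{k-1}}{k-2} + \cdots + \binom{n_j}{j-1}$.

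For the general $r$-shadow $\mathcal{S}_r$, I would iterate the $r=1$ inequality, checking that applying it to the lower bound $\binom{n_k}{k-1} + \binom{n_{k-1}}{k-2} + \cdots + \binom{n_j}{j-1}$ yields exactly $\binom{n_k}{k-2} + \binom{n_{k-1}}{k-3} + \cdots + \binom{n_j}{j-2}$; this step relies on the cascade form being preserved under the shadow operation, which can be verified by a direct combinatorial identity. The main obstacle is the shadow compression inequality $|\partial C_{ij}(\F)| \le |\partial \F|$, which requires a delicate case analysis depending on which of $i, j$ belong to a given candidate shadow set and on whether the compression on the corresponding top-level set is blocked by an already-present member of $\F$; this is where the essential combinatorial content of the theorem sits, and it is the only step I would not expect to be essentially bookkeeping.
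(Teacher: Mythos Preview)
The paper does not prove \Cref{thm:KrKa}; it is stated in the preliminaries as a classical result with citations to Kruskal~\cite{Kru63} and Katona~\cite{Kat68}, and is then invoked as a black box (notably in the proof of \Cref{technical}). There is therefore no ``paper's own proof'' to compare against.

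That said, your outline is the standard compression proof and is essentially correct. One small point worth tightening: in your iteration step for general $r$, the phrase ``the cascade form being preserved under the shadow operation'' is slightly imprecise. What you actually need is two facts: (i) the shadow of the colex initial segment of size $m$ in $\binom{[n]}{k}$ is exactly the colex initial segment in $\binom{[n]}{k-1}$ of the claimed size, and (ii) for colex initial segments the shadow size is monotone in $m$. Together with the $r=1$ case these give $|\mathcal{S}_r(\F)| = |\partial \mathcal{S}_{r-1}(\F)| \ge |\partial(\text{colex initial segment of size } |\mathcal{S}_{r-1}(\F)|)| \ge |\partial(\text{colex initial segment of size } v)|$, where $v$ is the inductive lower bound on $|\mathcal{S}_{r-1}(\F)|$. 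Your proposal implicitly uses both, but it is worth making the monotonicity explicit, since the $r=1$ statement alone only controls the shadow of a family of a \emph{fixed} size, not of a family whose size is only bounded below.
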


We also refer occasionally to another classic result, which generalises Sperner's theorem, called the BLYM inequality and due, independently, to Yamamoto~\cite{Yam54}, Me\v{s}alkin~\cite{Mes63}, Bollob\'as~\cite{Bol65}, Lubell~\cite{Lub66}.

\begin{theorem}[\cite{Lub66,Bol65,Yam54,Mes63}]\label{thm:BLYM} 
If $\F\subseteq 2^{[n]}$ is a family with no $2$-chain, then, writing $a_k=|\F \cap \binom{[n]}{k}|$,
\[
\sum_{k=0}^n\frac{a_k}{\binom{n}{k}} \le 1.
\]
\end{theorem}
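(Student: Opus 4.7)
The plan is to invoke Lubell's double-counting argument, which fits perfectly with the permutation viewpoint just introduced in the preliminaries (where the authors already defined what it means for a permutation to \emph{meet} a family). The idea is to count, in two different ways, the set
\[
\Omega = \{(\sigma, F) : \sigma \in S_n,\ F \in \F,\ F \text{ lies on the maximal chain associated to } \sigma\}.
\]

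First I would bound $|\Omega|$ from above. A maximal chain $(\emptyset, \{\sigma(1)\}, \{\sigma(1),\sigma(2)\}, \dots, [n])$ is totally ordered by inclusion, so any two of its elements form a $2$-chain. Since $\F$ contains no $2$-chain, each $\sigma \in S_n$ meets $\F$ in at most one element, which gives $|\Omega| \le n!$.

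Next I would compute $|\Omega|$ from below by grouping by $F$. For a fixed $F \in \F$ with $|F| = k$, the maximal chains containing $F$ are exactly those permutations $\sigma$ whose first $k$ values list the elements of $F$ in some order and whose remaining $n-k$ values list $[n] \setminus F$ in some order; there are $k!(n-k)!$ of these. Summing over $F$ and grouping by size,
\[
|\Omega| = \sum_{F \in \F} |F|!\,(n-|F|)! = \sum_{k=0}^{n} a_k \cdot k!(n-k)!.
\]
Combining the two bounds and dividing through by $n!$ yields
\[
\sum_{k=0}^{n} \frac{a_k}{\binom{n}{k}} \le 1,
\]
which is the claimed inequality.

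There is not really a hard step here; the whole proof hinges on recognising that the antichain hypothesis translates cleanly into the statement that every maximal chain meets $\F$ at most once. The only thing one should be slightly careful about is that both $\emptyset$ and $[n]$ are allowed as members of $\F$ and both are correctly accounted for in the count $k!(n-k)!$ (taking $0!=1$), so the formula holds uniformly across all layers $k = 0, 1, \dots, n$.
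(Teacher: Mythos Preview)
Your proof is correct and is exactly the Lubell double-counting argument the paper has in mind. The paper does not give a standalone proof of the BLYM inequality, but it proves \Cref{Lubell function} by the same permutation count and then remarks that Lubell's proof of BLYM ``relies on the observation that if $\F$ contains no comparable pairs, then $\Lambda(n,\F)\le 1$''; combining that observation with \Cref{Lubell function} is precisely your argument.
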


\section{Proof of \Cref{main}}\label{sec:proof}

The section should be read as follows: in \Cref{sub:lubell} we relate the main quantity at the heart of \Cref{main} to the quantity $\Lambda^\star(n,\comporder)$, which is the average number of times a permutation $\sigma\in S_n$ meets the family $\F$. Our goal is then to upper bound $\Lambda^\star(n,\comporder)$.
\Cref{sub:reduction} contains most of the work toward proving \Cref{main}, first in the case where $\comporder$ is constant, then in \Cref{main lemma}, showing how \Cref{main} is true conditional upon a certain bound on a quantity $\xi^\star_n$, which is the maximum edge density of a graph on two consecutive layers of the Boolean lattice. In \Cref{sub:xistar} we discuss various results about bounding $\xi^\star_n(n)$ and wrap up the proof.

\subsection{The Lubell function}\label{sub:lubell}

In the proof of \Cref{main}, we relate the problem to another quantity.
Let $\F\subseteq 2^{[n]}$. For $\sigma\in S_n$, call $T(\sigma)$ the number of times that $\sigma$ meets $\F$, that is $T(\sigma) = |\{F\in\F:\{\sigma(1),\sigma(2),\dots,\sigma(|F|)\}=F\}|$. Then the {\em Lubell function} of $\F$ is
\[
\Lambda(n,\F)=\frac{\sum_{\sigma\in S_n}T(\sigma)}{n!}.
\]
That is, $\Lambda(n,\F)$ measures the average number of times a permutation meets $\F$, over all permutations $\sigma\in S_n$.
%, and $\Lambda^\star(n,\cdot)$ maximises this over all families $\F$ with property $\cdot$.
We also define %, for poset $\Po$, $\Lambda^\star(n,\Po)$ to be the maximum of $\Lambda(n,\F)$ over all $\Po$-free families $\F$, and similarly 
$\Lambda^\star(n,\comporder)$ to be the maximum of $\Lambda(n,\F)$ over all families $\F$ such that $G_\F$ has components of order at most $\comporder$. 
 The following further justifies the utility of the Lubell function.

\begin{lemma}
\label{Lubell function}
    For $\F\in 2^{[n]}$,
    \[
    \Lambda(n,\F)=\sum_{F\in\F}\frac{1}{\binom{n}{|F|}}.
    \]
\end{lemma}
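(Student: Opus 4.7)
The plan is a straightforward double counting argument: swap the order of summation between $\sigma \in S_n$ and $F \in \F$, and then for each fixed $F$ count the number of permutations $\sigma$ whose corresponding maximal chain passes through $F$.

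First, I would unpack $T(\sigma)$ as an indicator sum and exchange the summations:
\[
\sum_{\sigma\in S_n} T(\sigma) = \sum_{\sigma\in S_n}\sum_{F\in\F}\mathbf{1}\bigl[\{\sigma(1),\dots,\sigma(|F|)\}=F\bigr] = \sum_{F\in\F}\bigl|\{\sigma\in S_n:\{\sigma(1),\dots,\sigma(|F|)\}=F\}\bigr|.
\]
Next, for a fixed $F\in\F$ with $|F|=k$, I would count the permutations $\sigma$ whose first $k$ values form exactly $F$. There are $k!$ choices for the ordering of the elements of $F$ in positions $1,\dots,k$, and $(n-k)!$ choices for the ordering of the remaining $n-k$ elements of $[n]\setminus F$ in positions $k+1,\dots,n$. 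Hence the inner cardinality equals $|F|!(n-|F|)!$.

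Dividing by $n!$ and using the identity $\tfrac{k!(n-k)!}{n!} = \tfrac{1}{\binom{n}{k}}$ gives
\[
\Lambda(n,\F) = \frac{1}{n!}\sum_{F\in\F}|F|!(n-|F|)! = \sum_{F\in\F}\frac{1}{\binom{n}{|F|}},
\]
as required. There is no real obstacle here; the only thing to be careful about is the bijective correspondence between permutations $\sigma\in S_n$ and maximal chains, which is built into the definition of $T(\sigma)$: a permutation $\sigma$ ``meets'' $F$ at step $|F|$ precisely when the initial segment $\{\sigma(1),\dots,\sigma(|F|)\}$ equals $F$, so $T(\sigma)$ counts exactly the elements of $\F$ appearing on the maximal chain associated with $\sigma$.
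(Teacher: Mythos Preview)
Your proof is correct and takes essentially the same approach as the paper: a double-counting (swap of summation) over pairs $(\sigma,F)$, followed by the computation $|\{\sigma:\{\sigma(1),\dots,\sigma(|F|)\}=F\}|=|F|!(n-|F|)!$ and division by $n!$. The paper phrases the swap as ``both sums count the pairs $(\sigma,F)$'' rather than via an explicit indicator, but the argument is identical.
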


\begin{proof}
    For a set $F\in\F$, call $T(F)$ the number of permutations $\sigma\in S_n$ that have $F$ as an initial segment. The key observation is that
    \[
    \sum_{\sigma\in S_n}T(\sigma)=\sum_{F\in\F}T(F).
    \]
    This holds because both sums are counting exactly the number of pairs $(\sigma,F)$ for which $F$ is an initial segment of $\sigma$. But, for a given $F$, we can compute $T(F)$ exactly, namely $T(F)=|F|!(n-|F|)!$. Thus
    \[
    \sum_{\sigma\in S_n}T(\sigma)=\sum_{F\in\F}|F|!(n-|F|)!.
    \]
    Dividing both sides by $n!$, we get
    \[
    \Lambda(n,\F)=\frac{\sum_{\sigma\in S_n}T(\sigma)}{n!}=\frac{\sum_{F\in\F}|F|!(n-|F|)!}{n!}=\sum_{F\in\F}\frac{1}{\binom{n}{|F|}}.\qedhere
    \]
\end{proof}

As we will see in \Cref{sec:exact}, this proof is very closely related to Lubell's proof of the BLYM inequality, which relies on the observation that if $\F$ contains no comparable pairs, then $\Lambda(n,\F)\le 1$.
Like in Lubell's proof, \Cref{Lubell function} can be used to upper bound the size of $\F$ as follows.

\begin{lemma}
\label{equivalence}
    For all $n\ge 1$ and all $1\le \comporder\le 2^n$,
    \(
    \La(n,\comporder)\le\Lambda^\star(n,\comporder) \La(n). %\binom{n}{\lfloor n/2\rfloor}.
    \)
\end{lemma}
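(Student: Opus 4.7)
The plan is essentially to mimic Lubell's original argument for the BLYM inequality, since \Cref{Lubell function} has already done the hard bookkeeping. The key observation is that every binomial coefficient $\binom{n}{|F|}$ appearing in the Lubell function is bounded above by the central binomial $\binom{n}{\lfloor n/2\rfloor}=\La(n)$, which immediately lets us convert an upper bound on $\Lambda(n,\F)$ into an upper bound on $|\F|$.

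Concretely, I would proceed as follows. Let $\F\subseteq 2^{[n]}$ be arbitrary. By \Cref{Lubell function},
\[
\Lambda(n,\F)=\sum_{F\in\F}\frac{1}{\binom{n}{|F|}}\ge \sum_{F\in\F}\frac{1}{\La(n)}=\frac{|\F|}{\La(n)},
\]
where the inequality uses $\binom{n}{|F|}\le\binom{n}{\lfloor n/2\rfloor}=\La(n)$ for every $F\subseteq[n]$. Rearranging gives $|\F|\le\Lambda(n,\F)\cdot\La(n)$, an inequality that holds for \emph{every} family $\F$.

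Now restrict to those $\F$ for which the components of $G_\F$ have order at most $\comporder$. By the definition of $\Lambda^\star(n,\comporder)$, any such $\F$ satisfies $\Lambda(n,\F)\le \Lambda^\star(n,\comporder)$, and so $|\F|\le \Lambda^\star(n,\comporder)\cdot\La(n)$. Taking the supremum over all such families yields $\La(n,\comporder)\le \Lambda^\star(n,\comporder)\La(n)$, as required.

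There is really no obstacle here: once the Lubell function has been defined and \Cref{Lubell function} proved, the lemma reduces to the trivial inequality $\binom{n}{|F|}\le\binom{n}{\lfloor n/2\rfloor}$. The substance of the paper lies not in this step but in the subsequent task of bounding $\Lambda^\star(n,\comporder)$, which is where the reduction to the rainbow-cycle Tur\'an problem enters.
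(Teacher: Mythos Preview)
Your proof is correct and essentially identical to the paper's own argument: both invoke \Cref{Lubell function}, use the trivial bound $\binom{n}{|F|}\le\binom{n}{\lfloor n/2\rfloor}=\La(n)$ to obtain $|\F|\le\Lambda(n,\F)\La(n)$, and then pass to the maximum over admissible families.
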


\begin{proof}

Notice that the righthand-side  in \Cref{Lubell function} is minimised if $|F|=\lfloor n/2\rfloor$ for all $F\in\F$, giving 
\[
\Lambda(n,\F)\ge \frac{|\F|}{\binom{n}{\lfloor n/2\rfloor}}
\quad\text{ or, equivalently, }\quad
|\F|\le\Lambda(n,\F)\La(n) %\binom{n}{\lfloor n/2\rfloor}
\]
for all $\F\subseteq 2^{[n]}$. %In particular, this holds if $G_\F$ has components of order at most $\comporder$.
In particular, if $G_\F$ has components of order at most $\comporder$, then
\(
|\F|\le\Lambda^\star(n,\comporder)\La(n).  %\binom{n}{\lfloor n/2\rfloor}. \qedhere
\)
\end{proof}

So for \Cref{main} it is sufficient to show that $\Lambda^\star(n,\comporder)\le 1+o(1)$.

\subsection{Towards \Cref{main}}\label{sub:reduction}

Before proving \Cref{main}, we first handle the simpler case where $\comporder$ is fixed, as the proof strategy is the same, but the details are easier to follow. The full proof of \Cref{main} requires a more careful analysis and fine control on the edge density of a subfamily on two consecutive layers. 

For a given family $\F\subseteq 2^{[n]}$, it will be helpful to have a certain basic property.
Let us call a set $Y\in 2^{[n]}$ a {\em skip} of $\F$ if $Y\notin \F$ and there exist $X,Z\in \F$ such that $X\subseteq Y\subseteq Z$.
We say that $\F$ is {\em skipless} if it admits no skip.
In the context of computing $\La(n,\comporder)$, we may assume skiplessness without loss of generality,
 by the following claim.

\begin{lemma}
\label{skipless}
    For any $\comporder>0$, there exists a skipless family $\F\subseteq 2^{[n]}$ of size  $\La(n,\comporder)$ such that  the components of $G_\F$ all have order at most $\comporder$. 
\end{lemma}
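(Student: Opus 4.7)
The plan is to use a secondary extremal principle. Among all families $\F\subseteq 2^{[n]}$ of size $\La(n,\comporder)$ all of whose components have order at most $\comporder$, I would pick one that maximises the potential $\phi(\F):=\sum_{F\in\F}|F|$, and show that such an $\F$ must already be skipless.

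Suppose for contradiction that $\F$ admits a skip $Y$, witnessed by $X,Z\in\F$ with $X\subsetneq Y\subsetneq Z$ (the strict inclusions hold because $X,Z\in\F$ while $Y\notin\F$). Since $X\subseteq Z$ yields an edge $XZ$ of $G_\F$, the sets $X$ and $Z$ lie in a common component $C$ of $G_\F$. The crucial observation is that every element $W\in\F$ comparable with $Y$ already lies in $C$: if $W\subseteq Y$ then $W\subseteq Z$, while if $Y\subseteq W$ then $X\subseteq W$, so in either case $W$ is joined by an edge of $G_\F$ to an element of $C$.

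Two consequences follow. First, adding $Y$ to $\F$ would simply enlarge $C$ to $C\cup\{Y\}$ without merging any other components, producing a family of size $|\F|+1$ whose largest component has order $|C|+1$. Extremality of $|\F|=\La(n,\comporder)$ therefore forces $|C|=\comporder$. Second, consider the swap $\F'':=(\F\setminus\{X\})\cup\{Y\}$. Reapplying the observation, now with $X$ excluded, shows that the neighbours of $Y$ in $G_{\F''}$ are all contained in $C\setminus\{X\}$, so the component of $Y$ in $G_{\F''}$ has order at most $|C\setminus\{X\}|+1=\comporder$. Every other component of $G_{\F''}$ is either an untouched component of $G_\F$ or a connected subgraph of $C\setminus\{X\}$, each of order at most $\comporder-1$. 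Hence $\F''$ has size $\La(n,\comporder)$ with all components of order at most $\comporder$, but $\phi(\F'')-\phi(\F)=|Y|-|X|>0$, contradicting the choice of $\F$.

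The only real subtlety is the component bookkeeping in the swap $X\leftrightarrow Y$: one must check that deleting $X$ and inserting $Y$ neither creates a component of order greater than $\comporder$ nor glues two previously separate components together. The key observation---that any $W\in\F$ comparable with $Y$ is trapped inside the single component $C$---handles both issues at once, so once this is in place the extremality argument closes immediately.
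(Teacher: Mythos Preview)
Your proof is correct. Both your argument and the paper's use a secondary extremal principle followed by a swap, but the specific choices differ. The paper selects, among extremal families, one with the \emph{fewest skips}; given a skip $Y$ inside a component $C$, it adds $Y$ and then deletes a maximal element $X_{\max}$ of $C$, and must separately verify that removing a maximal element creates no new skip. You instead maximise $\phi(\F)=\sum_{F\in\F}|F|$ and swap out the lower witness $X$ for $Y$; the potential increase $|Y|-|X|>0$ is immediate, so no skip-counting is needed at all. The component bookkeeping is identical in both arguments: your key observation (any $W\in\F$ comparable with $Y$ already lies in $C$, via $W\subseteq Z$ or $X\subseteq W$) is exactly what the paper invokes to justify that adding $Y$ only enlarges $C$ and does not merge components. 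One minor remark: your ``first consequence'' that $|C|=\comporder$ is correct but unnecessary for the swap step, since $|C\setminus\{X\}|+1=|C|\le\comporder$ already follows just from $C$ being a component of $G_\F$.
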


\begin{proof}
Let $\F$ be a family of size $\La(n,\comporder)$ such that the components of $G_\F$ all have order at most $\comporder$, and moreover assume that $\F$ is such that it has the fewest skips. For a contradiction, let us assume that $\F$ is not skipless. That is, there is some set $Y\in 2^{[n]}$ such that $Y\notin \F$ and there exist $X,Z\in\F$ such that $X\subseteq Y\subseteq Z$.

Let us first consider the family $\F'$ formed by adding $Y$ to  $\F$.
In $\F'$  clearly $Y$ is not a skip.
Moreover, $Y$ does not certify a skip in $\F'$ that was not a skip in $\F$, since $X\subseteq Y\subseteq Z$.
Thus $\F'$ has fewer skips than $\F$ has.

Let $C$ be the component of $G_\F$ that contains $X$ and $Z$. Let $C'$ be the component of $G_{\F'}$ that contains $X$, $Z$ and, necessarily, $Y$. Since $X\subseteq Y\subseteq Z$, we know that $C'$ is equal to $C$ with $Y$ added.
Let $X_{\max}$ be the maximal element of $C$, which is also the maximal element of $C'$.

Next consider the family $\F''=\F'\setminus \{X_{\max}\}$ of cardinality $|\F\cup \{Y\}\setminus \{X_{\max}\}|=|\F|=\La(n,\comporder)$. The component of $G_{\F''}$ containing $Y$ must be $C\cup \{Y\} \setminus \{X_{\max}\}$ and has cardinality $|C|$. We thus conclude that the components of $G_{\F''}$ all have order at most $\comporder$.
The removal of $X_{\max}$ from $\F'$ cannot produce a skip, and so $\F''$ has fewer skips than $\F$, a contradiction to the choice of $\F$.
\end{proof}

Note that this only says there is at least one skipless family that is optimal, but does not imply that all optimal families are skipless (although that might be true).

In our argument, we find it convenient to isolate our consideration of sets in the family that are not too large or too small, so we introduce a third parameter in the definitions of $\La(n,\comporder)$ and $\Lambda^\star(n,\comporder)$ to indicate this constraint. Let $\La(n,\comporder,k)$ denote the size of a largest family $\F\subseteq 2^{[n]}$ such that all the components of $G_\F$ have order at most $\comporder$ and such that $k\le |F|\le n-k$ for all $F\in\F$. Similarly $\Lambda^\star(n,\comporder,k)$ is the maximum of $\Lambda(n,\F)$ over all such families $\F$. The analogue of \Cref{equivalence} holds:

\begin{equation}
\label{modified equiv}
\La(n,\comporder,k)\le\Lambda^\star(n,\comporder,k) \La(n). %\binom{n}{\lfloor n/2\rfloor}.
\end{equation}
Moreover, the analogue of \Cref{skipless} for $\La(n,\comporder,k)$ holds by the same proof: there is a skipless family $\F$ of size $\La(n,\comporder,k)$, no member of which has more than $k$ or fewer than $n-k$ elements, such that the components of $G_\F$ all have order at most $\comporder$.

Before proceeding, we require some more notation.
    For $\F\subseteq 2^{[n]}$, we denote by $S(\F)$ the set of permutations that meet $\F$ at least once and let $\Ssize(\F)=|S(\F)|$. Furthermore, for $i=0,\dots,n$, denote by $S_i(\F)$ the set of permutations that meet $\F$ exactly $i$ times and let $\Ssize_i(\F)=|S_i(\F)|$. So $\Ssize(\F)=\Ssize_1(\F)+\dots+\Ssize_n(\F)$.
    Finally, denote by $\overline{\Lambda}(n,\F)$ the average number of times a permutation $\sigma\in S(\F)$ meets $\F$, that is,
    \[
    \overline{\Lambda}(n,\F)=\frac{\sum_{i=1}^ni\Ssize_i(\F)}{\Ssize(\F)}.
    \]
    Note that $\overline{\Lambda}(n,\F)\ge\Lambda(n,\F)$.

The reason for this finer notation is that it will be helpful to partition $\F$ and consider the average over the permutations meeting the specific parts separately, by the following lemma.
\begin{lemma}
\label{partitioning}
    Given a  partition $\F_1\cup \dots\cup \F_k$ of $\F\subseteq 2^{[n]}$,
    \[
    \Lambda(n,\F)\le\overline{\Lambda}(n,\F)\le\max_{1\le i\le k}\overline{\Lambda}(n,\F_i).
    \]
\end{lemma}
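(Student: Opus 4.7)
The plan splits naturally into the two inequalities, with the first a short definitional manipulation and the second carrying the substantive content of the lemma.

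For $\Lambda(n,\F)\le\overline{\Lambda}(n,\F)$: both quantities share the numerator $\sum_{\sigma\in S_n}T(\sigma)=\sum_{i\ge 1}i\,\Ssize_i(\F)$ (since permutations $\sigma\notin S(\F)$ contribute $T(\sigma)=0$), while $\Lambda$ divides by $n!$ and $\overline{\Lambda}$ divides by $\Ssize(\F)\le n!$; nonnegativity of the numerator immediately yields the inequality.

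For $\overline{\Lambda}(n,\F)\le\max_{1\le i\le k}\overline{\Lambda}(n,\F_i)$ I would first decompose the numerator across the partition:
\[
\sum_{\sigma\in S_n}T_{\F}(\sigma)=\sum_{i=1}^{k}\sum_{\sigma\in S_n}T_{\F_i}(\sigma)=\sum_{i=1}^{k}\Ssize(\F_i)\,\overline{\Lambda}(n,\F_i),
\]
so that $\overline{\Lambda}(n,\F)=\sum_{i}(\Ssize(\F_i)/\Ssize(\F))\,\overline{\Lambda}(n,\F_i)$. For this weighted sum to be bounded by $\max_i\overline{\Lambda}(n,\F_i)$ the coefficients must sum to at most $1$; equivalently, $\Ssize(\F)\ge\sum_i\Ssize(\F_i)$. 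Since $S(\F)=\bigcup_i S(\F_i)$, the union bound gives the reverse inequality in general, so equality must be forced, and equality says precisely that the sets $S(\F_i)$ are pairwise disjoint, i.e.\ no permutation meets two distinct parts.

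The main obstacle is exactly this pairwise-disjointness condition: it does not hold for a truly arbitrary partition, and the inequality can then genuinely fail. A small counterexample: with $n=2$ and $\F=\{\emptyset,\{1,2\}\}$ split into singletons, every permutation meets $\F$ twice so $\overline{\Lambda}(n,\F)=2$, while each $\overline{\Lambda}(n,\F_i)=1$. The disjointness does hold, however, whenever each $\F_i$ is a union of components of $G_\F$: two sets of $\F$ on a common maximal chain are comparable, hence in the same component of $G_\F$, hence in the same part; so two elements from different $\F_i$ cannot cohabit a maximal chain. I would therefore execute the plan above under this implicit hypothesis on the partition, which is both necessary for the asserted bound and, by inspection of \Cref{sub:reduction}, the setting in which the lemma is actually invoked downstream. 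Reaching the lemma in its fully general wording appears to require strictly more than what the definitions of $\overline{\Lambda}$ and $\Ssize$ can supply.
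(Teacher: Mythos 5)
Your argument is correct, and your diagnosis is sharper than the paper's own treatment. The paper proves \Cref{partitioning} with a single sentence --- the average of a partitioned multiset of positive numbers is at most the maximum of the part averages --- which is exactly the computation you formalise via $\overline{\Lambda}(n,\F)=\sum_i\bigl(\Ssize(\F_i)/\Ssize(\F)\bigr)\,\overline{\Lambda}(n,\F_i)$. As you observe, for that one-liner to apply one needs the multiset $\{T(\sigma):\sigma\in S(\F)\}$ to actually be partitioned by $S(\F_1),\dots,S(\F_k)$, i.e.\ these sets must be pairwise disjoint, equivalently no maximal chain may meet two distinct parts. Your counterexample ($n=2$, $\F=\{\emptyset,[2]\}$ split into singletons, where $\overline{\Lambda}(n,\F)=2$ while each part has $\overline{\Lambda}=1$) is valid and shows the literal statement is false for general partitions; your repair --- requiring each part to be a union of components of $G_\F$, so that two members of a common maximal chain, being comparable and hence adjacent in $G_\F$, land in the same part --- is exactly what is needed, and it covers the application in \Cref{main lemma}, where the parts are the components themselves.

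One caveat on your closing remark: the lemma is also invoked in \Cref{sub:reduction} for the partition of $\F$ into singletons (the warm-up proof for fixed $\comporder$), and singletons are not unions of components there. That application survives only because the paper silently reinterprets $\Ssize_i(\{F\})$ as the number of permutations through $F$ that meet the \emph{component} of $F$ exactly $i$ times; under the literal definitions one has $\overline{\Lambda}(n,\{F\})=1$ for every $F$, so the conclusion $\Lambda(n,\F)\le\max_F\overline{\Lambda}(n,\{F\})=1$ would assert the BLYM bound for arbitrary families, which is false. So the hypothesis you add is genuinely necessary, your proof of the corrected statement is complete, and the singleton application requires a separate justification of the kind the warm-up argument in fact carries out in situ.
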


\begin{proof}
    This is a general fact: if we partition a set of positive numbers, then the average of all the numbers cannot be larger than the maximum of the averages of the parts.
\end{proof}

\noindent
Generally speaking, the more parts, the easier the bound on $\overline{\Lambda}(n,\F_i)$ is to compute, but the weaker the resultant bound in \Cref{partitioning}. For the next proof, it suffices to partition $\F$ into singletons $\{\{F\}\}_{F\in\F}$, but later for \Cref{main} in full we will need to consider larger parts.

\begin{proof}[Proof of \Cref{main} if $\comporder>0$ is fixed]
    We first show that $\La(n,\comporder,\log n)\le (1+o(1))\La(n)$, and %\binom{n}{\lfloor n/2\rfloor} 
    then at the end derive the result by simply adding in all the sets of size less than $\log n$ or more than $n-\log n$. The choice of $\log n$ here is not canonical, as any slowly increasing function in $n$ will do.
    
    By \Cref{equivalence}, or rather \Cref{modified equiv}, it is sufficient to show that $\Lambda^\star(n,\comporder,\log n)\le 1+o(1)$ as $n\to\infty$.
    Let $\F$ be a family of size $\La(n,\comporder,\log n)$, no member of which has more than $n-\log n$ or fewer than $\log n$ elements, such that the components of $G_\F$ all have order at most $\comporder$.
    We want then to show that  $\Lambda(n,\F)\le 1+o(1)$ (where the $o(1)$ term depends only on $n$ and not on the family $\F$).
 By \Cref{skipless}, or rather its $\La(n,\comporder,k)$ analogue, we may assume that $\F$ is skipless.
    
    We actually show something slightly stronger than we need, that for each set $F\in\F$ we have $\overline{\Lambda}(n,\{F\})\le 1+o(1)$ as $n\to\infty$. By \Cref{partitioning}, we will then have $\Lambda(n,\F)\le\max_{F\in\F}\overline{\Lambda}(n,\{F\})\le 1+o(1)$.
  To do this, our goal is to show that for any given set $F$ in component $C$, most permutations meeting $F$ only meet $C$ once. So consider the set  $S(\{F\})$ of permutations meeting $F$. We partition $S(\{F\})$ into parts according to the valuation of the pair $(\sigma(|F|),\sigma(|F|+1))$. There are $|F|(n-|F|)$ such parts, since $F$ has $|F|$ elements that can determine the first coordinate and $n-|F|$ non-elements that can determine the second coordinate. These parts are all disjoint and of equal size, namely, all having $(|F|-1)!(n-|F|-1)!$ permutations. 
    
    Since $\F$ is skipless, any permutation $\sigma \in S(\{F\})$ meets $C$ (and thus $F$) only once if and only if both sets $\sigma([|F|-1])$ and  $\sigma([|F|+1])$ are not in $C$.
    
    Writing $M_1$ for the number of subsets of $F$ in $C$, at most $(n-|F|)M_1$ of the parts of the partition of $S(\{F\})$ contain permutations $\sigma$ with $\sigma([|F|-1])\in C$. Similarly, writing $M_2$ for the number of supersets of $F$ in $C$, at most $|F|M_2$ of the parts contain permutations $\sigma$ with $\sigma([|F|+1])\in C$. But since $|C|\le \comporder$, we have $M_1+M_2\le \comporder$. So at most $M_1(n-|F|)+M_2|F|\le \comporder\max(|F|,n-|F|)$ of the parts contain permutations that meet $C$ more than once, that is,
    \begin{equation}
    \label{more than once}
    \sum_{i=2}^\comporder\Ssize_i(\{F\})\le \comporder\max(|F|,n-|F|)(|F|-1)!(n-|F|-1)!.
    \end{equation}
    We also know that $\Ssize(\{F\})=|F|!(n-|F|)!$. Hence
    \begin{align*}
    \overline{\Lambda}(n,\{F\})
    &=\frac{\Ssize_1(\{F\})+\sum_{i=2}^\comporder i\Ssize_i(\{F\})}{|F|!(n-|F|)!} %\\&
    \le 1+\frac{\comporder^2\max(|F|,n-|F|)(|F|-1)!(n-|F|-1)!}{|F|!(n-|F|)!}\\
    &=1+\frac{\comporder^2\max(|F|,n-|F|)}{|F|(n-|F|)}
    =1+\frac{\comporder^2}{\min(|F|,n-|F|)}\le 1+\frac{\comporder^2}{\log n}\le 1+o(1).
    \end{align*}
    In the first inequality, we used \Cref{more than once} and the fact that $\Ssize_1(\{F\})\le \Ssize(\{F\})$. In the penultimate inequality, we used the fact that $\log n\le|F|\le n-\log n$, and at the end we used that $\comporder$ is fixed.

    By \Cref{partitioning} and \Cref{modified equiv}, this suffices to show
    \(
    \La(n,\comporder,\log n)\le (1+o(1))\La(n) %\binom{n}{\lfloor n/2\rfloor}.
    \)
     as $n\to\infty$.
    It only remains to account for the constraint on the size of $F$. But for this, we can just observe that the number of sets we excluded is negligible, specifically, that
    \begin{align*}
    \La(n,\comporder)
    &\le \La(n,\comporder,\log n)+|\{X\subseteq 2^{[n]}:|X|<\log n\text{ or }|X|>n-\log n\}|\\
    &\le (1+o(1))\La(n)+2\log n\cdot \binom{n}{\lfloor\log n\rfloor}\le (1+o(1))\La(n) %\binom{n}{\lfloor n/2\rfloor}
    \end{align*}
    as $n\to\infty$, where the last inequality follows for example by Stirling's approximation.
\end{proof}

That there is room for improvement in the estimates we made is evident. For example, we bounded the entire average by separately bounding the average for each singleton set in $\F$, which clearly is not optimal. 
Instead of singletons, we can take the parts as components of $G_\F$. If we believe \Cref{main conjecture} and the correctness of our proposed extremal construction, it is intuitive that all components are (near) equal in size in the optimal case, and so we should not lose much by doing this. In other words, we believe there is an optimal family $\F$ such that $\Lambda(n,\F)$ is equal to $\overline{\Lambda}(n,C)$ for all components $C$.

Another important step is where we upper bounded, for a given set $F\in C$, the number of subsets and supersets of $F$ that are also in $C$ in the layers immediately above and below that of $F$. Thanks to the assumption that $\F$ is skipless, this shows that only a small fraction of the permutations meeting $F$ also meet other sets in $\F$. We have crudely bounded this number of subsets and supersets by $\comporder$. When $\comporder$ is a fixed constant this is good enough, but if $\comporder(n)$ grows quickly it is far too loose. 
Indeed, if this bound were near to optimal on average, it would mean that the component $C$ has approximately $\comporder^2/2$ edges, which we will see is impossible.

Being more careful, we are in fact interested in a bound on  the average degree of a vertex of $G'_\F$. It turns out that a good general bound on the  edge density between two consecutive layers on the Boolean lattice will suffice for our purposes.
Let us define some notation for this task.
    For $0\le k< n$, let $\A\subseteq\binom{[n]}{k}$, $\B\subseteq\binom{[n]}{k+1}$. Then we denote by $\xi(\A,\B)$ the number of comparable pairs in $\A\cup \B$.
    We also denote by $\xi^\star_n(m)$ the function that, given $m>0$, returns the maximum of $2\xi(\A,\B)/m$ over all pairs $(\A,\B)$ with $\A\subseteq\binom{[n]}{k}$, $\B\subseteq\binom{[n]}{k+1}$ and $|\A|+|\B|=m$ over all $0\le k<n$.
That is, $\xi^\star_n(m)$ is the largest average degree of a subgraph of order $m$ on any two consecutive layers of $G_{2^{[n]}}$.
Using a refinement of the argument above, we next show how control on $\xi^\star_n$ is enough to deduce \Cref{main}.

\begin{lemma}
\label{main lemma}
Let $\comporder=\comporder(n)$. 
As $n\to\infty$, if $\xi^\star_n(\comporder)=o(n)$, then
     $\La(n,\comporder) \le (1+o(1)) \La(n)$. %\binom{n}{\lfloor n/2\rfloor}$.
\end{lemma}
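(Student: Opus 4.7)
The plan is to refine the fixed-$\comporder$ argument by partitioning $\F$ into the components of $G_\F$ (rather than singletons) and using the hypothesis $\xi^\star_n(\comporder)=o(n)$ to bound the average number of ``excess meetings'' of permutations per component. Fix $k_0=k_0(n)$ with $\xi^\star_n(\comporder)=o(k_0)$ and $k_0=o(n)$; such a choice exists since $\xi^\star_n(\comporder)=o(n)$. As in the fixed-$\comporder$ case, it suffices to prove $\Lambda^\star(n,\comporder,k_0)\le 1+o(1)$ and then absorb the sets of size outside $[k_0,n-k_0]$, which total at most $2k_0\binom{n}{k_0}=o(\La(n))$ by Stirling since $k_0=o(n)$. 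By the analogues of \Cref{modified equiv} and \Cref{skipless} for $\La(n,\comporder,k_0)$, we may fix a skipless family $\F$ all of whose members have size in $[k_0,n-k_0]$ and whose $G_\F$-components have order at most $\comporder$; by \Cref{partitioning} it is then enough to prove $\overline{\Lambda}(n,C)\le 1+o(1)$ uniformly over the components $C$ of $G_\F$.

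The structural input is that skiplessness forces every $\sigma\in S(C)$ to meet $C$ in a \emph{consecutive block} of layers: if $\sigma$ meets $C$ at sizes $k_1<k_2$, then for each $k_1\le k\le k_2$ the intermediate set $\sigma([k])$ lies between two members of $C$ and hence is in $\F$ (by skiplessness), and in $C$ since it is comparable to both. Consequently $T(\sigma)-1$ equals exactly the number of edges of $G'_C$ traversed by $\sigma$, and a standard two-way count yields
\[
A-|S(C)|=\sum_{\sigma\in S(C)}(T(\sigma)-1)=\sum_{(F_1,F_2)\in E(G'_C)}|F_1|!(n-|F_1|-1)!\;=:\;B,
\]
where $A:=\sum_{F\in C}|F|!(n-|F|)!=\sum_{\sigma\in S_n}T(\sigma)$. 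Hence $\overline{\Lambda}(n,C)=A/(A-B)=1/(1-B/A)$, and it suffices to show $B/A=o(1)$.

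For this, let $e_k$ be the number of edges of $G'_C$ with smaller endpoint of size $k$. The definition of $\xi^\star_n$ applied to the pair $(C_k,C_{k+1})$, together with $|C_k|+|C_{k+1}|\le\comporder$ (replacing $\xi^\star_n$ by its nondecreasing upper envelope if necessary), gives $2e_k\le(|C_k|+|C_{k+1}|)\xi^\star_n(\comporder)$. Substituting into $B$, rewriting $|F_1|!(n-|F_1|-1)!=k!(n-k)!/(n-k)$, and shifting indices to pool the contributions of an edge at layers $k$ and $k+1$ yields
\[
B\;\le\;\frac{n\,\xi^\star_n(\comporder)}{2}\sum_k |C_k|\,k!(n-k)!\cdot\frac{1}{k(n-k)}.
\]
Comparing termwise to $A=\sum_k|C_k|\,k!(n-k)!$ and using $k,n-k\ge k_0$ (so $k(n-k)\ge nk_0/2$ for $k_0\le n/2$, which holds eventually) gives $B/A\le \xi^\star_n(\comporder)/k_0=o(1)$, hence $\overline{\Lambda}(n,C)\le 1+o(1)$ as required. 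The main obstacle is the conceptual shift from the crude $\comporder$-based bound on $T(\sigma)$ used in the fixed case to the sharper, edge-based identification of $T(\sigma)-1$: this is precisely where skiplessness is used essentially, and once it is in place the $\xi^\star_n$ hypothesis feeds into a careful (but routine) double count that delivers the desired $o(1)$ bound on $B/A$.
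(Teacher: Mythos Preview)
Your proof is correct and follows essentially the same strategy as the paper: restrict to sets with sizes bounded away from the extremes, assume skiplessness, partition into components, and bound $\overline{\Lambda}(n,C)$ via the $\xi^\star_n$ control on edges of $G'_C$ between consecutive layers. Your execution is in fact a little cleaner than the paper's: where the paper obtains only an inequality for $\sum_{i\ge 2} i\,s_i(C)$ by counting bad parts of $S(C_k)$, you use the exact identity $A-|S(C)|=B$ (coming from the consecutive-block observation) to write $\overline{\Lambda}(n,C)=A/(A-B)$ directly, which removes a layer of estimation; you also explicitly flag the need to pass to a nondecreasing envelope of $\xi^\star_n$, a point the paper uses tacitly.
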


\begin{proof}
    We shall always assume that $n$ is large enough when needed. 
    We will first bound $\La(n,\comporder,n/4)$ and then at the end note that we left out a negligible number of sets.
    The choice of $n/4$ is unimportant, as any $rn$ with fixed $0<r<1/2$ suffices.
    By \Cref{modified equiv} it is sufficient to show that $\Lambda^\star(n,\comporder,n/4)\le 1+o(1)$ as $n\to\infty$. 
    Let $\F$ be a family of size $\La(n,\comporder,n/4)$, no member of which has more than $3n/4$ or fewer than $n/4$ elements, such that the components of $G_\F$ all have order at most $\comporder$.
    We want then to show that  $\Lambda(n,\F)\le 1+o(1)$ (where the $o(1)$ term depends only on $n$ and not on the family $\F$).
 By \Cref{skipless}, or rather its $\La(n,\comporder,k)$ analogue, we may assume that $\F$ is skipless.

    We actually show $\overline{\Lambda}(n,C)\le 1+o(1)$  for each component $C$ of $G_\F$. By \Cref{partitioning}, this then gives
    \[
    \Lambda(n,\F)\le\max_{C\text{ component of }G_\F}\overline{\Lambda}(n,C)\le 1+o(1).
    \]
    Now call a permutation {\em bad} if it meets $\F$ more than once, and {\em good} otherwise. Our goal is to upper bound the number of bad permutations.
    
    For a given component $C$ and some $k\in[n-1]$, denote by $C_k$ the set of elements of $C$ on layer $k$ and recall that $S(C_k)$ is the set of permutations that have an initial segment in $C_k$, and also recall that $\Ssize(C_k)=|S(C_k)|$. We then partition $S(C_k)$ according to the valuation of the triple $(\sigma([k]),\sigma(k),\sigma(k+1))$ (where the first coordinate indeed indexes the element of $C_k$). This partitions $S(C_k)$ into $|C_k|k(n-k)$ parts. %, each being the set of permutations $\sigma$ such that $\sigma(k-1),\sigma(k),\sigma(k+1)$ are fixed. 
   These parts are disjoint and of equal size, namely, with $(k-1)!(n-k-1)!$ permutations.
   
       Since $\F$ is skipless, any permutation $\sigma \in S(C_k)$ meets $C_k$ only once if and only if both sets $\sigma([k-1])$ and  $\sigma([k+1])$ are not in $C$.
   Thus, by the definition of the parts, each part contains either only bad permutations or only good permutations. Moreover, every bad permutation must meet both endpoints of some edge in $G_\F$ between $C_{k-1}$ and $C_k$ or between $C_k$ and $C_{k+1}$. This is depicted in \Cref{G}.
    
    \begin{figure}
    \centering
\definecolor{ududff}{rgb}{0.30196078431372547,0.30196078431372547,1}
\definecolor{uuuuuu}{rgb}{0,0,0}
\begin{tikzpicture}[line cap=round,line join=round,>=triangle 45,x=1cm,y=1cm]
\clip(5.44,6) rectangle (17.88,11.5);
\draw [line width=2pt] (8,11)-- (9,7);
\draw [line width=2pt] (15,7)-- (16,11);
\draw [line width=2pt] (15.250588235294117,8.00235294117647)-- (8.751764705882353,7.992941176470588);
\draw [line width=2pt] (8.494117647058824,9.023529411764706)-- (15.498823529411766,8.99529411764706);
\draw [line width=2pt] (15.754117647058823,10.016470588235293)-- (8.245882352941177,10.016470588235293);
\draw [line width=2pt] (9.339919865896785,9.02012006456079)-- (8,8);
\draw [line width=2pt] (8,8)-- (7.16,7.04);
\draw [line width=2pt] (8,8)-- (8.8,6.96);
\draw [line width=2pt] (12.000037879954169,9.009397385296872)-- (11.326598055516463,7.996670116875562);
\draw [line width=2pt] (11.326598055516463,7.996670116875562)-- (10.4,7);
\draw [line width=2pt] (11.326598055516463,7.996670116875562)-- (12,7);
\draw [line width=2pt] (12.000037879954169,9.009397385296872)-- (12,10.016470588235293);
\draw [line width=2pt] (12,10.016470588235293)-- (11.28,10.96);
\draw [line width=2pt] (12,10.016470588235293)-- (12.72,10.98);
\draw (11.82,11.02) node[anchor=north west] {$\dots$};
\draw (6.62,11.00) node[anchor=north west] {$\dots$};
\draw (7.8,7.28) node[anchor=north west] {$\dots$};
\draw (10.7,7.16) node[anchor=north west] {$\dots$};
\draw [line width=2pt] (9.339919865896782,9.020120064560789)-- (6.82,10.02);
\draw [line width=2pt] (6.82,10.02)-- (6.06,10.96);
\draw [line width=2pt] (6.82,10.02)-- (7.52,10.94);
\draw [line width=2pt] (14.799992385778875,8.998111038615685)-- (14.560000000000107,10.016470588235293);
\draw [line width=2pt] (14.560000000000107,10.016470588235293)-- (13.96,10.96);
\draw [line width=2pt] (14.560000000000107,10.016470588235293)-- (15.26,10.96);
\draw [line width=2pt] (14.799992385778875,8.998111038615685)-- (16.22,8);
\draw [line width=2pt] (16.22,8)-- (15.72,7.04);
\draw [line width=2pt] (16.22,8)-- (16.98,7.08);
\draw (15.92,7.2) node[anchor=north west] {$\dots$};
\draw (14.4,11.04) node[anchor=north west] {$\dots$};
\draw [line width=2pt] (6.82,10.02)-- (6.42,10.94);
\draw [line width=2pt] (8,8)-- (7.54,7.04);
\draw [line width=2pt] (12,10.016470588235293)-- (11.62,10.94);
\draw [line width=2pt] (11.326598055516461,7.996670116875562)-- (11.68,7);
\draw [line width=2pt] (14.56,10.016470588235293)-- (14.26,10.96);
\draw [line width=2pt] (16.273797752809042,7.934876404494317)-- (16.72,7.08);
\begin{scriptsize}
\draw[color=black] (13.35,8.20) node {$C_{k-1}$};
\draw[color=black] (13.28,9.29) node {$C_k$};
\draw[color=black] (13.29,10.21) node {$C_{k+1}$};
\end{scriptsize}
\end{tikzpicture}
\vspace{-15pt}
    \caption{$S(C_k)$ is split into $|C_k|k(n-k)$ parts, three of which are depicted here. The leftmost part contains only good permutations, the other two contain only bad permutations.}
    \label{G}
\end{figure}
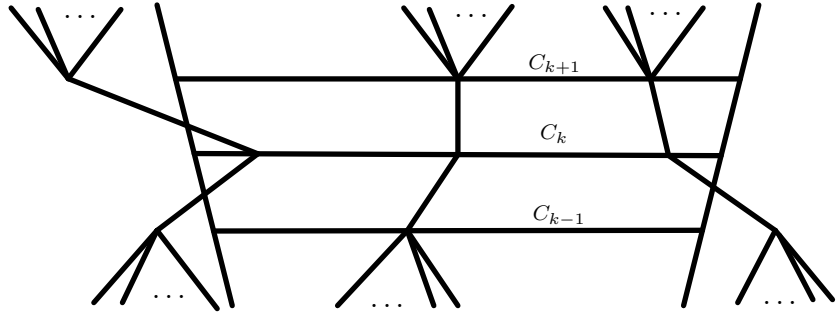
    
    By assumption, there are at most 
    \[
    \frac{1}2(|C_k|+|C_{k+1}|)\xi^\star_n(|C_k|+|C_{k+1}|)\le \frac{1}2(|C_k|+|C_{k+1}|)\xi^\star_n(\comporder)   %\le \frac{1}2(|C_k|+|C_{k+1}|)Cf(\comporder)
    \]
    edges between $C_k$ and $C_k+1$ and similarly at most 
    \[
    \frac{1}2(|C_k|+|C_{k-1}|)\xi^\star_n(|C_k|+|C_{k-1}|)\le \frac{1}2(|C_k|+|C_{k-1}|)\xi^\star_n(\comporder)   %\le \frac{1}2(|C_k|+|C_{k-1}|)Cf(\comporder)
    \]
    edges between $C_{k-1}$ and $C_k$. Thus at most 
    \[
    \frac{1}2(|C_k|+|C_{k-1}|)\xi^\star_n(\comporder)(n-k)+\frac{1}2(|C_k|+|C_{k+1}|)\xi^\star_n(\comporder)k %Cf(\comporder)
    \]
    of the parts of $S(C_k)$ contain bad permutations. These account for at most 
    \begin{align*}
    &\frac{1}2\xi^\star_n(\comporder)((|C_k|+|C_{k-1}|)(n-k)+(|C_k|+|C_{k+1}|)k)(k-1)!(n-k-1)! \\%Cf(\comporder)
    &\le \frac{1}2\xi^\star_n(\comporder)(|C_{k-1}|+|C_k|+|C_{k+1}|)n(k-1)!(n-k-1)! %Cf(\comporder)
    \end{align*}
    bad permutations.
    
    As the above argument holds true for each layer $C_k$, $k\in[n-1]$, we have at most
    \begin{align}
    \label{bad perm bound}
    \frac{1}2\xi^\star_n(\comporder)\sum_{k=1}^{n-1}(|C_{k-1}|+|C_k|+|C_{k+1}|)n(k-1)!(n-k-1)! %Cf(\comporder)
    \end{align}
    bad permutations per component $C$. But actually we have fewer, because if a permutation $\sigma$ meets $C$ $i$ times, then we are counting it $i$ times in our sum, once for each layer it meets. So the sum above is actually an upper bound for
    \[
    \sum_{i=2}^ni\Ssize_i(C).
    \]
    On the other hand, we have 
    \begin{equation}
    \label{all perm bound}
    \Ssize(C)=\frac{\sum_{k=1}^{n-1}|C_k|k!(n-k)!}{\overline{\Lambda}(n,C)}.
    \end{equation}
    This is by definition of $\overline{\Lambda}$, since
    \[
    \sum_{k=1}^{n-1}|C_k|k!(n-k)!
    \]
    counts all permutations meeting $C$ with repetition, i.e.~it is equal to $\sum_{i=1}^ni\Ssize_i(C)$ (see the proof of \Cref{Lubell function}).
    Putting things together, we obtain
    \begin{align*}
    \frac{\sum_{i=2}^ni\Ssize_i(C)}{\Ssize(C)}
    &\le\frac{n\xi^\star_n(\comporder)\sum_{k=1}^{n-1}(|C_{k-1}|+|C_k|+|C_{k+1}|)(k-1)!(n-k-1)!}{2\sum_{k=1}^{n-1}|C_k|k!(n-k)!/\overline{\Lambda}(n,C)}\\ %Cf(\comporder)
    &\le\frac{n\xi^\star_n(\comporder)\sum_{k=1}^{n-1}5|C_k|(k-1)!(n-k-1)!}{2\sum_{k=1}^{n-1}|C_k|k!(n-k)!/\overline{\Lambda}(n,C)}\\ %Cf(\comporder)
    &\le\frac{40\xi^\star_n(\comporder)\sum_{k=1}^{n-1}|C_k|k!(n-k)!}{n\sum_{k=1}^{n-1}|C_k|k!(n-k)!/\overline{\Lambda}(n,C)} %Cf(\comporder)
    =\frac{40\overline{\Lambda}(n,C)\xi^\star_n(\comporder)}n. %Cf(\comporder)
    \end{align*}
    The first inequality is from \Cref{bad perm bound,all perm bound}. In the second inequality, we regrouped the sum by each $|C_k|$, so that the total coefficient for each term is
    \[
    (k-2)!(n-k)!+(k-1)!(n-k-1)!+k!(n-k-2)!\le 5(k-1)!(n-k-1)!.
    \]
    This is true because either one of $(k-2)!(n-k)!$ and $k!(n-k-2)!$ is smaller than $(k-1)!(n-k-1)!$ while the other one is at most $3(k-1)!(n-k-1)!$, provided that $n/4\le k\le 3/4n$, which we have assumed to be the case. In the third inequality, we multiplied the numerator by $16k(n-k)$ and the denominator by $n^2$. We may do this as long as $16k(n-k)\ge n^2$, which is true since $k\ge n/4$ and $n-k\ge n/4$. 
    
    Now we have
    \[
    \overline{\Lambda}(n,C)=\frac{\Ssize_1(C)}{\Ssize(C)}+\frac{\sum_{i=2}^ni\Ssize_i(C)}{\Ssize(C)}\le 1+\frac{40\overline{\Lambda}(n,C)\xi^\star_n(\comporder)}n. %Cf(\comporder)
    \]
    If we rearrange, we get
    \[
    \overline{\Lambda}(n,C)\le\left(1-\frac{40\xi^\star_n(\comporder)}n\right)^{-1}. %Cf(\comporder)
    \]

By the assumption that $\xi^\star_n(\comporder)=o(n)$, we deduce that $\overline{\Lambda}(n,C)\le 1+o(1)$. Since this is true of each component of $\F$, it follows that $\Lambda(n,\F)\le 1+o(1)$ by \Cref{partitioning}. Since this is true for all families $\F$  of size $\La(n,\comporder,n/4)$ with components of order at most $\comporder$ and sets of sizes between $n/4$ and $3/4n$, we deduce that $\Lambda^\star(n,\comporder,n/4)\le 1+o(1)$. By \Cref{modified equiv}, we then conclude that   %Cf(\comporder)
\(
\La(n,\comporder,n/4)\le (1+o(1))\La(n) %\binom{n}{\lfloor n/2\rfloor}.
\)
as $n\to\infty$.
    It only remains to account for the constraint on the size of $F$. But for this, we can just observe that the number of sets we excluded is negligible, specifically, that
\begin{align*}
\La(n,\comporder)
&\le\La(n,\comporder,n/4)+|\{X\subseteq 2^{[n]}:|X|<n/4\text{ or }|X|>3/4n\}|\\
&\le(1+o(1))\La(n)+\frac{n}{2}\binom{n}{\lfloor n/4\rfloor} \le (1+o(1))\La(n) %\binom{n}{\lfloor n/2\rfloor}
\end{align*}
    as $n\to\infty$, where the last inequality follows for example by Stirling's approximation.
\end{proof}

\subsection{Bounding $\xi^\star_n$}\label{sub:xistar}

Upper bounds on $\xi^\star_n$ can be seen in interesting counterpoint to the result of Kleitman~\cite{Kle68} that minimises the number of $2$-chains.
Indeed, bounding $\xi^\star_n$ from above is a question of maximising $2$-chains, and similar questions have a long tradition; see for example~\cite{DaFr83,AlFr85,ADGS15}.
Curiously, the following question was only posed relatively recently.

\begin{question}[Question 5.1~\cite{ADGS15}]
\label{question 5.1}
Given $0\le k_1\le k_2\le n$, $0\le a\le \binom{n}{k_1}$ and $0\le b\le\binom{n}{k_2}$, which families $\A\subseteq\binom{[n]}{k_1}$ of size $a$ and $\B\subseteq\binom{[n]}{k_2}$ of size $b$ maximise $\xi(\A,\B)$?
\end{question}

\noindent
Maximising $\xi^\star_n(m)$ corresponds to optimising this question over all $k_1,k_2,a,b$ satisfying $k_2=k_1+1$ and $a+b=m$.
As the authors of~\cite{ADGS15} point out, there is an interesting connection between \Cref{question 5.1} with $k_2=k_1+1$ and the Kruskal--Katona theorem (\Cref{thm:KrKa}) in the case $r=1$. Indeed we can infer that the minimum possible size of ${\mathcal S}_1$ is such that $|\F|k\le\frac{1}2(|\F|+|{\mathcal S}_1|)\xi^\star_n(|\F|+|{\mathcal S}_1|)$, since $|\F|k$ is the number of edges between $\F$ and ${\mathcal S}_1$, which by definition of $\xi^\star_n$ has to be less than $\frac{1}2(|\F|+|{\mathcal S}_1|)\xi^\star_n(|\F|+|{\mathcal S}_1|)$. So an upper bound on $\xi^\star_n$ translates into a lower bound on $|{\mathcal S}_1|$, potentially giving a weaker version of Kruskal--Katona.

As we showed in \Cref{sub:reduction},
\Cref{main lemma} allows us to use results about $\xi^\star_n$ to obtain results about $\La(n,\comporder)$. We now relate $\xi^\star_n$ to a well-studied Tur\'an-type problem.
Recall that a proper edge-colouring of a graph $G=(V,E)$ is a mapping $c: E\to \Z^+$ such that any two incident edges $e_1, e_2$ must have $c(e_1)\ne c(e_2)$.
A cycle in $G$ of length $\ell\ge 3$ is called {\em rainbow} (with respect to $c$) if no two edges of the cycle receive the same colour.
For $\ell\ge3$, we denote by $\mad^\star(\comporder,\mathcal{O}_\ell)$, the largest average degree in a graph $G$ of order $\comporder$ such that there exists a proper edge-colouring of $G$ with no copies of a rainbow cycle of length $\ell$.
We denote by $\mad^\star(\comporder,\mathcal{O}^\star)$ the largest average degree in a graph $G$ of order $\comporder$ such that there exists a proper edge-colouring of $G$ with no rainbow cycles.
%Note that, in the literature, the notation $\ex^\star(\comporder,G)$ is used to indicate the maximum number of edges in a rainbow-$G$-free graph of order $\comporder$, so that $\mad^\star(\comporder,\mathcal{O}_\ell) = 2\ex^\star(\comporder,\mathcal{O}_\ell)/\comporder$. We introduce this new notation in terms of average degree for consistency across the paper.

%    We call a cycle $\mathcal{O}_\ell$ a rainbow cycle (under a given edge colouring) if no two edges of $\mathcal{O}_\ell$ have the same colour.
%
%    For a cycle $\mathcal{O}_\ell$, we denote by $\ex^\star(n,\mathcal{O}_\ell)$ the maximum number of edges in a graph $G$ such that there exists a proper edge colouring of $G$ with no copies of the rainbow cycle $\mathcal{O}_\ell$.
%
%    Further define $\mathcal{O}^\star=\{\mathcal{O}_\ell:\ell\ge 3\}$, then we denote by $\ex^\star(n,\mathcal{O}^\star)$ the maximum number of edges in a graph $G$ such that there exists a proper edge colouring of $G$ with no rainbow cycles.
%\end{definition}
%
%The property of having a proper edge colouring with no rainbow cycles is particularly useful for us because it holds for a graph on two consecutive layers of the Boolean lattice. This is formalised in the following theorem
\begin{lemma}
\label{rainbow to lattice}
    For all $n,\comporder\ge 1$ we have
    \(
    \xi^\star_n(\comporder)\le \mad^\star(\comporder,\mathcal{O}^\star).
    \)
\end{lemma}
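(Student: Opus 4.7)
The plan is to transport the extremal pair $(\A,\B)$ realising $\xi^\star_n(\comporder)$ into the rainbow-cycle setting by endowing the inclusion bipartite graph with its natural canonical edge-colouring, and then check that this colouring is proper and rainbow-cycle-free. Specifically, let $0\le k<n$ and $\A\subseteq\binom{[n]}{k}$, $\B\subseteq\binom{[n]}{k+1}$ be such that $|\A|+|\B|=\comporder$ and $2\xi(\A,\B)/\comporder=\xi^\star_n(\comporder)$. Define $H$ to be the bipartite graph on vertex set $\A\cup\B$ whose edges are precisely the comparable pairs $\{A,B\}$ with $A\in\A$, $B\in\B$, and $A\subseteq B$. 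By definition, $|V(H)|=\comporder$, $|E(H)|=\xi(\A,\B)$, and the average degree of $H$ equals $2\xi(\A,\B)/\comporder = \xi^\star_n(\comporder)$.

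Next, I would colour each edge $\{A,B\}\in E(H)$ by the unique element $c(A,B)\in[n]$ such that $B=A\cup\{c(A,B)\}$. The colour palette is thus $[n]$. To see this colouring is proper, observe that two edges meeting at a common $A\in\A$ go up to $A\cup\{i\}$ and $A\cup\{j\}$, and distinct neighbours force $i\ne j$; likewise, two edges meeting at a common $B\in\B$ go down to $B\setminus\{i\}$ and $B\setminus\{j\}$, forcing $i\ne j$.

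The main step is verifying that there is no rainbow cycle. Any cycle in $H$ has even length $2t$ and alternates $A_1,B_1,A_2,B_2,\dots,A_t,B_t,A_1$ between $\A$ and $\B$. Setting $c_i=c(A_i,B_i)$ and $d_i=c(A_{i+1},B_i)$ (indices mod $t$), the relations $B_i=A_i\cup\{c_i\}=A_{i+1}\cup\{d_i\}$ give
\[
A_{i+1}=(A_i\cup\{c_i\})\setminus\{d_i\}.
\]
Iterating around the cycle and equating $A_1=A_{t+1}$ yields the identity
\[
\sum_{i=1}^{t}\bigl(\mathbf{1}_{c_i}-\mathbf{1}_{d_i}\bigr)=\mathbf{0}
\]
in $\Z^{[n]}$, so $\{c_1,\dots,c_t\}=\{d_1,\dots,d_t\}$ as multisets. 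Consequently at least one $c_i$ coincides with some $d_j$, so not all $2t$ edge-colours of the cycle are distinct; the cycle is not rainbow.

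Putting these pieces together, $H$ is a graph of order $\comporder$ equipped with a proper edge-colouring admitting no rainbow cycle, and so its average degree is at most $\mad^\star(\comporder,\mathcal{O}^\star)$ by definition. This yields $\xi^\star_n(\comporder)=2\xi(\A,\B)/\comporder\le\mad^\star(\comporder,\mathcal{O}^\star)$, as desired. The only step that takes a moment's thought is the no-rainbow-cycle check; everything else is essentially definitional.
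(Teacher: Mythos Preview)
Your proof is correct and follows essentially the same approach as the paper's: colour each edge of the bipartite inclusion graph by the unique element added, check properness, and verify the absence of rainbow cycles (the paper does this last step by tracking a single colour along the cycle until it is removed, rather than via your telescoping identity $\sum_i(\mathbf 1_{c_i}-\mathbf 1_{d_i})=\mathbf 0$, but the underlying observation is identical). One minor notational slip: in this paper $\comporder$ renders as $t$, so your use of $t$ for the half-length of the cycle clashes; rename it to, say, $\ell$.
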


\begin{proof}
    Assume for a contradiction that the inequality does not hold. This means there are two sets $\A\subseteq\binom{[n]}{k}$, $\B\subseteq\binom{[n]}{k+1}$ for some $0\le k<n$ with $|\A|+|\B|=\comporder$ and $2\xi(\A,\B)/\comporder>\mad^\star(\comporder,\mathcal{O}^\star)$. By definition of $\mad^\star(\comporder,\mathcal{O}^\star)$, this means that every proper edge-colouring of the graph $G_{\A\cup \B}$ contains a rainbow cycle. However, let us consider the following edge-colouring: given any edge $e$ of $G_{\A\cup\B}$, and supposing it corresponds to the pair $(A, A\cup\{i\})$ for some $i\in [n]$, then colour $e$ with colour $i$.

    The edge-colouring is clearly proper, since if $X$ had two incident edges with colour $i$, these would both have as other endpoint $X\cup\{i\}$ (or $X\backslash\{i\}$ depending on whether $X\in\A$ or $X\in\B$). Also, the colouring contains no rainbow cycles: consider any cycle and any starting point $A\in\A$ for a traversal of the cycle (in either direction). The first edge will have some colour $i\in [n]$, corresponding to its other endpoint $A\cup\{i\}$. Now as we continue the transversal, the element $i$ will eventually have to be removed at some point, at which time the corresponding edge will have colour $i$ (and it is necessarily distinct from the edge corresponding to $(A,A\cup \{i\})$). So the cycle has colour $i$ at least twice, and so is not a rainbow cycle. We have arrived at a contradiction, as desired.
\end{proof}

Notice that the above argument in fact gives some handle on \Cref{question 5.1} in the specific case $k_2=k_1+1$, but it moreover breaks down for this purpose when $k_2>k_1+1$.

\Cref{rainbow to lattice,main lemma} gives us bounds on $\La(n,\comporder)$ via the establishment of good bounds on $\mad^\star(\comporder,\mathcal{O}^\star)$. There has already been much work on this latter problem. This was initiated by Keevash, Mubayi,
Sudakov and Verstra\"{e}te~\cite{KMSV07}, who showed that $\mad^\star(\comporder,\mathcal{O}^\star)=O(\comporder^{1/3})$, simply by bounding it with $\mad^\star(\comporder,\mathcal{O}_6)$.
%This result is no better than what we could already get with $\Theta_{3,3}$-freeness. 
There have been numerous successive improvements~\cite{DLS13,Jan23,Tom24,KLLT24,JaSu24}. The current state of the art is due to Alon, Buci\'c, Sauermann, Zakharov, Zamir~\cite{ABSZZ+}.

\begin{theorem}[\cite{ABSZZ+}]
\label{Alon}
$\mad^\star(\comporder,\mathcal{O}^\star)=O(\log \comporder \log\log \comporder)$.
\end{theorem}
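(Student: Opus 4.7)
The plan is to suppose for contradiction that $G$ is a graph on $\comporder$ vertices with a proper edge-colouring $c$ having no rainbow cycle, and average degree at least $d = C \log \comporder \log\log \comporder$ for a sufficiently large constant $C$, and to derive a rainbow cycle. First I would pass to a subgraph of minimum degree at least $d/2$ by iteratively deleting any vertex whose degree falls below this threshold; this loses at most a factor of two in density and lets the rest of the argument proceed uniformly from every vertex.

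The heart of the argument should be an iterative rainbow path-extension process from a chosen root $v$. Maintain a collection of rainbow paths from $v$, and at each stage extend every leaf along edges whose colour does not already appear on the path from $v$ to that leaf. The key rigidity principle is that if two rainbow paths from $v$ both end at some vertex $u$, then their colour sets cannot be disjoint: otherwise, after trimming their longest common initial segment, the two paths close into a rainbow cycle. This is the only place where the no-rainbow-cycle hypothesis enters directly, and it forces strong coincidences among colour sets of merging paths.

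The next step is a counting / pigeonhole argument. After roughly $\log \comporder$ iterations, the total number of rainbow paths built up vastly exceeds $\comporder$, so very many paths end at a common vertex $u$, and the rigidity principle above forces the colour sets of these $v$--$u$ paths to share substantial structure. One then aims to extract from this dense family of partially colour-constrained rainbow $v$--$u$ paths a pair whose edges can be spliced into a rainbow cycle. Quantitatively, to make the iteration yield the desired contradiction, each leaf should be extended along on the order of $\log \comporder$ colour-fresh edges at every stage, and the $\log\log \comporder$ factor appears as the slack needed to guarantee that enough colour-fresh edges remain available throughout the process.

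The main obstacle is precisely this colour-avoidance bookkeeping during the iteration: as paths grow, more colours become forbidden at each leaf, and one must show that enough colour-fresh extensions survive and that, at the end, the merged family of $v$--$u$ rainbow paths really does encode a rainbow cycle rather than a purely colour-overlapping configuration. Making this slack precise, and then converting the final pigeonhole configuration into an actual rainbow cycle, is the technical core of the Alon--Buci\'c--Sauermann--Zakharov--Zamir argument, and I would expect any honest attempt to occupy the bulk of the proof.
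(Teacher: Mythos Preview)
The paper does not give its own proof of this statement: Theorem~\ref{Alon} is quoted from~\cite{ABSZZ+} and used as a black box. The only comment the paper makes about the proof is the sentence immediately following the theorem, namely that the authors of~\cite{ABSZZ+} ``proved this result with a sophisticated argument based on robust sublinear expanders.'' So there is no in-paper proof to compare your proposal against.

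That said, your sketch is not the approach of~\cite{ABSZZ+}. Their argument hinges on the robust sublinear expander machinery (in the Koml\'os--Szemer\'edi lineage), passing to an expanding subgraph and exploiting expansion to build and connect rainbow structures; this is what produces the near-optimal $\log \comporder \log\log \comporder$ bound. Your rainbow path-growing plus pigeonhole idea is closer in spirit to the earlier, weaker bounds in this line (e.g.~\cite{KMSV07,Jan23,Tom24,KLLT24,JaSu24}). Concretely, two points in your outline are genuine gaps rather than routine details. First, your ``rigidity principle'' only says that two rainbow $v$--$u$ paths must share \emph{some} colour; that alone does not let you splice many such paths into a rainbow cycle, and the step ``convert the final pigeonhole configuration into an actual rainbow cycle'' is doing all of the real work without any mechanism supplied. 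Second, the quantitative bookkeeping does not close: with minimum degree only $\Theta(\log \comporder \log\log \comporder)$, after $\Theta(\log \comporder)$ rounds each path already carries $\Theta(\log \comporder)$ forbidden colours, so the claim that $\Theta(\log \comporder)$ colour-fresh extensions survive at every leaf at every stage is not justified and in fact is exactly the place where naive path-extension arguments stall. The expander technology in~\cite{ABSZZ+} is what circumvents this.
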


\noindent
This result is sharp up to at most a $O(\log\log \comporder)$ factor. They proved this result with a sophisticated argument based on robust sublinear expanders.
\Cref{main} follows immediately from \Cref{Alon}.

\begin{proof}[Proof of \Cref{main}]
The lower bound on $\La(n,\comporder)$ follows from considering $\F = \binom{[n]}{\lfloor n/2 \rfloor}$, in which case $G_\F$ has only singleton components.
By \Cref{rainbow to lattice} and \Cref{Alon}, $\xi^\star_n(\comporder)=O(\log \comporder\log\log \comporder)$. Since $\comporder=O(2^{g(n)})$ with $g(n)=o(n/\log n)$ as $n\to\infty$, which is the premise of \Cref{main}, it follows that $\log \comporder\log\log \comporder=O(g(n)\log g(n))=o(n)$. And so we conclude from \Cref{main lemma} that 
    \(
    \La(n,\comporder)\le (1+o(1))\La(n). %\binom{n}{\lfloor n/2\rfloor}.\qedhere
    \)
\end{proof}

\section{Exact approaches}\label{sec:exact}

We now switch to a discussion about exact results relating to \Cref{main conjecture}. These include in \Cref{sub:BLYM} a generalisation of the BLYM inequality and a proof that \Cref{main conjecture} holds if the components have a specific shape, and in \Cref{sub:exact} the cases $k=1$ and $k=2$ of \Cref{main conjecture}.

\subsection{Generalising the BLYM inequality}\label{sub:BLYM}

In order to tackle some special cases of \Cref{main conjecture}, we employ a modified version of the BLYM inequality (\Cref{thm:BLYM}) which allows for the presence of $2$-chains. The problem then is that it becomes more difficult to precisely count each permutation when we add up over the sets of $\F$. In \Cref{sec:proof}, we showed that on average we count each permutation $1+o(1)$ times, but the error term, while negligible, makes this inexact. On the other hand, if we assume a particular form for each the components, then we can count the permutations precisely. 

Let us call a family $\F'\subseteq 2^{[n]}$ a {\em diamond} if it consists of all sets $X$ such that $A\subseteq X\subseteq B$ for some sets $A\subseteq B \subseteq [n]$, which we refer to as the bottom and top corners, respectively, of the diamond. 

The following result may be of independent interest.
\begin{theorem}[Diamond BLYM]
\label{diamond BLYM}
For  $\F\subseteq 2^{[n]}$, suppose that the components of $G_\F$ are induced by diamond subfamilies $\F_1,\dots,\F_\ell$. 
For $i,j\in \{0,\dots,n\}$, let $a_{i,j}$ denote the number of components of $G_\F$ whose bottom corner is on layer $i$ and whose top corner is on layer $i+j$. Then
\begin{equation}
\label{BLYM}
\sum_{i=0}^n\sum_{j=0}^n\frac{a_{i,j}}{\binom{n-j}i}\le 1.
\end{equation}
\end{theorem}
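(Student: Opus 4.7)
The plan is a Lubell-style argument, generalizing the proof of the BLYM inequality (\Cref{thm:BLYM}) by replacing single antichain elements with whole diamonds. For each diamond component $\F_r = \{X : A_r \subseteq X \subseteq B_r\}$ with $|A_r|=i$ and $|B_r|=i+j$, write $T_r = B_r \setminus A_r$ and $U_r = [n]\setminus T_r$, and, for $\sigma \in S_n$ chosen uniformly at random, let $E_r$ be the event that the first $i$ entries of the subsequence $\sigma|_{U_r}$ (i.e.~the restriction of $\sigma$ to the positions carrying values in $U_r$) are precisely the elements of $A_r$. The plan is to show the events $E_r$ are pairwise disjoint and each has probability $1/\binom{n-j}{i}$, so that summing yields \eqref{BLYM}.

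First I would check that $E_r$ holds if and only if the maximal chain $(\emptyset, \sigma([1]), \dots, \sigma([n]))$ meets $\F_r$. For the forward direction, let $s$ be the smallest index with $A_r \subseteq \sigma([s])$; under $E_r$ no element of $[n]\setminus B_r$ may appear in $\sigma(1),\dots,\sigma(s)$, so $A_r \subseteq \sigma([s]) \subseteq B_r$ and thus $\sigma([s]) \in \F_r$. For the converse, if $\sigma([k]) \in \F_r$ then the values in $U_r$ appearing among $\sigma(1),\dots,\sigma(k)$ are exactly the $i$ elements of $A_r$, so the first $i$ entries of $\sigma|_{U_r}$ are precisely $A_r$. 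Next I would verify pairwise disjointness: if $\sigma$ met two distinct diamond components at positions $k_1 < k_2$, then $\sigma([k_1]) \subsetneq \sigma([k_2])$ would be a comparable pair of members of $\F$ lying in different components of $G_\F$, contradicting the definition of $G_\F$.

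To compute $\Pr[E_r]$, I would build $\sigma$ by first choosing the $j$ positions in $[n]$ to be occupied by $T_r$ and ordering $T_r$ there ($\binom{n}{j}\, j!$ choices), then placing $A_r$ in the first $i$ of the remaining $n-j$ positions in some order and $[n]\setminus B_r$ in the final $n-i-j$ positions in some order ($i!\,(n-i-j)!$ choices). This gives $|E_r| = n!/\binom{n-j}{i}$. Combining disjointness with this count yields $\sum_r \Pr[E_r] \le 1$, which is exactly \eqref{BLYM}. I expect the main delicate step to be the equivalence described in the second paragraph: unlike in the classical Lubell argument, a maximal chain can cross a diamond at several successive layers, so one must carefully identify which index on the chain witnesses the meeting event and confirm the forced inclusions in both directions; the remaining disjointness and counting are then routine.
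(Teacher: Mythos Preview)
Your proof is correct and follows essentially the same Lubell-type double count as the paper: your event $E_r$ (that the first $i$ entries of $\sigma|_{U_r}$ are exactly $A_r$) is precisely the paper's condition that all elements of $A_r$ precede all elements of $[n]\setminus B_r$ in $\sigma$, and both your count $|E_r| = \binom{n}{j}\,j!\,i!\,(n-i-j)! = n!/\binom{n-j}{i}$ and your disjointness argument match the paper's. The only difference is packaging; the underlying argument is the same.
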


\begin{proof}
    We count the permutations of $[n]$ in two different ways. On the one hand there are $n!$ permutations. On the other hand, we can count the number $\Ssize(\F)$ of permutations $\sigma$ that meet $\F$. Notice that the permutations counted for each component are distinct, since if $X\in\F_i$ and $Y\in\F_j$ are both initial segments of the same permutation, then $X\subseteq Y$ or $Y\subseteq X$, thus $i=j$. So %we can then simply deduce that 
    \[
    \sum_{k=1}^\ell \Ssize(\F_k)\le n!.
    \]
    Now say $\F_k$ has bottom corner $A_k$ and top corner $B_k$. Then notice that a permutation $\sigma$ contains as initial segment some set $X\in\F_k$ if and only if all the elements of $A_k$ appear before all the elements of $[n]\backslash B_k$ in $\sigma$. Indeed, if this is the case, then all the initial segments of $\sigma$ whose last element comes after or at the last element of $A_k$ and before the first element of $[n]\backslash B_k$ are sets in $\F_k$ since they are supersets of $A_k$ and subsets of $B_k$. On the other hand, if not all elements of $A_k$ come before all elements of $[n]\backslash B_k$, then no initial segment of $\sigma$ can be a set in $\F_k$, since the last element of the segment would have to be after or at the last element of $A_k$, but then it would also contain some element of $[n]\backslash B_k$, which would imply that it cannot be a subset of $B_k$. 
    
    With this, we can simply count $\Ssize(\F_k)$ as
    \[
    \frac{n!}{(n-(|B_k|-|A_k|))!}|A_k|!(n-|B_k|)!%=\frac{n!}{(n-k)!}|A_k|!(n-|A_k|-k)!
    \]
    (i.e.~the elements of $B_k\backslash A_k$ can be at any position, but then the positions for the elements of $A_k$ are fixed, apart from their order, and the same holds for $[n]\backslash B_k$). Hence we get
    \begin{align*}
    \sum_{k=1}^\ell \Ssize(\F_k)
    =\sum_{k=1}^\ell \frac{n!|A_k|!(n-|B_k|)!}{(n-(|B_k|-|A_k|))!}
    =\sum_{i=0}^n\sum_{j=0}^n a_{i,j}\frac{n!i!(n-i-j)!}{(n-j)!}
    \le n!.
%    =\sum_{i=0}^{n-k}a_i\frac{n!i!(n-i-k)!}{(n-k)!}\le n!.
    \end{align*}
    By dividing both sides of this inequality by $n!$, we get
    \[
    \sum_{i=0}^n\sum_{j=0}^n a_{i,j}\frac{i!(n-i-j)!}{(n-j)!}
    \le 1,
    \]
    as required.
\end{proof}

From this, it follows that \Cref{main conjecture} holds under the additional assumption that each component is a diamond.

\begin{theorem}
\label{all diamond}
Let $k,n$ be integers with $0\le k\le n$. %such that $k=0$, $k=1$ or $k$ has the same parity as $n$. 
If for some $\F\subseteq 2^{[n]}$ the components of $G_\F$ are diamonds of height at most $k$, then
    \[
    |\F|\le 2^k\binom{n-k}{\lfloor (n-k)/2\rfloor}.
    \]
\end{theorem}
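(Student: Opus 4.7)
\medskip

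\noindent\textbf{Proof proposal.} The plan is to combine the Diamond BLYM inequality (\Cref{diamond BLYM}) with the observation that a diamond with bottom on layer $i$ and top on layer $i+j$ consists of exactly $2^j$ sets. So if the components $\F_1,\dots,\F_\ell$ are diamonds with corresponding indices $(i,j)$ counted by $a_{i,j}$ (necessarily with $j\le k$ by the height constraint), then
\[
|\F|=\sum_{i,j} 2^j a_{i,j}
=\sum_{i,j}\frac{a_{i,j}}{\binom{n-j}{i}}\cdot 2^j \binom{n-j}{i}
\le \Bigl(\max_{0\le j\le k,\,0\le i\le n-j} 2^j \binom{n-j}{i}\Bigr)\cdot \sum_{i,j}\frac{a_{i,j}}{\binom{n-j}{i}},
\]
and the right-hand sum is at most $1$ by \Cref{diamond BLYM}. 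So everything reduces to showing
\[
\max_{0\le j\le k,\, 0\le i\le n-j} 2^j \binom{n-j}{i} \;=\; 2^k\binom{n-k}{\lfloor (n-k)/2\rfloor}.
\]

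For fixed $j$, the inner maximum over $i$ is clearly achieved at the central binomial coefficient $f(n-j):=\binom{n-j}{\lfloor (n-j)/2\rfloor}$, so it suffices to show that the sequence $j\mapsto 2^j f(n-j)$ is non-decreasing on $\{0,1,\dots,k\}$; equivalently, $f(m)\le 2 f(m-1)$ for every $m\ge 1$. I would verify this separating the parities of $m$: if $m$ is even, Pascal's identity gives $\binom{m}{m/2}=\binom{m-1}{m/2}+\binom{m-1}{m/2-1}=2f(m-1)$; if $m$ is odd, it gives $f(m)=\binom{m-1}{(m-1)/2}+\binom{m-1}{(m-3)/2}\le 2f(m-1)$, since the second term is at most the first. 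Iterating from $j=k$ down to $j$ then yields $f(n-j)\le 2^{k-j}f(n-k)$, i.e.\ the desired monotonicity.

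Putting the two ingredients together, $|\F|\le 2^k f(n-k)=2^k\binom{n-k}{\lfloor(n-k)/2\rfloor}$, which is the bound sought. I do not expect any real obstacle: the heavy lifting is already done by \Cref{diamond BLYM}, and the remaining step is the standard fact that the central binomial coefficient grows by a factor of at most $2$ per increment. The only thing to be careful about is ensuring that the maximisation is taken over the correct range $0\le j\le k$ (using the diamond-height hypothesis) and that the bottom-corner layer $i$ can range over $\{0,\dots,n-j\}$ without affecting the conclusion, both of which are immediate.
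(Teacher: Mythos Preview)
Your proposal is correct and follows essentially the same route as the paper's proof: both invoke \Cref{diamond BLYM}, note that $|\F|=\sum_{i,j}2^ja_{i,j}$, and then bound this by $\max_{j\le k,\,i}2^j\binom{n-j}{i}$ times the BLYM sum. The only difference is cosmetic---the paper simply asserts that the maximum is attained at $j=k$, $i=\lfloor(n-k)/2\rfloor$, whereas you supply the (standard) verification via $f(m)\le 2f(m-1)$.
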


\begin{proof}
    We can rewrite \Cref{BLYM} as follows:
    \[
    \sum_{j=0}^k\sum_{i=0}^n\frac{2^ja_{i,j}}{2^j\binom{n-j}i}\le 1.
    \]
    Note that we have 
    \[
    |\F|=\sum_{j=0}^k\sum_{i=0}^n 2^ja_{i,j}.
    \]
    Thus, to make the family as large as possible, we want all the elements of $\F$ in terms with the largest denominator. The expression in the denominator is maximised for $j=k$ and $i=\lfloor(n-k)/2\rfloor$, giving
    \[
    \frac{|\F|}{2^k\binom{n-k}{\lfloor(n-k)/2\rfloor}}\le 1. \qedhere
    \]
\end{proof}

\subsection{The cases $k=1$ and $k=2$}\label{sub:exact}

It is intuitive that the diamond shape is optimal for maximising the size of $\F$ subject to a bound on component sizes in $G_\F$, at least when $k$ and $n$ have the same parity (so that the diamond shape is exactly centred). This is because the diamond shape is very compact: it minimises the number of sets in the middle layer that are unavailable for other components.
In general, it is tricky to translate this intuition into a proof that the optimal families have to be diamond-shaped (as then we would immediately have \Cref{main conjecture} in full via \Cref{all diamond}).

This is at least the case if $k=1$, as the only possible component of order $2$ is the diamond shape. The case $k=1$ of \Cref{main conjecture} is not a new result, since having components of order at most $2$ is equivalent to $G_\F$ not containing any copies of the $\vee$-shaped graph. This was resolved by Katona and Tarj{\'a}n in 1983~\cite{KaTa83}. We note how this result also follows from the generalised BLYM inequality.

\begin{theorem}[\cite{KaTa83}]
If $\F\subseteq 2^{[n]}$ is such that $G_\F$ contains no copy of the $\vee$-shaped graph, then
\[
|\F|\le 2\binom{n-1}{\lfloor (n-1)/2\rfloor}.
\]
\end{theorem}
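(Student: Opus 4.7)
The plan is to derive this as a direct corollary of \Cref{all diamond} applied with $k=1$, via two essentially routine reductions. First, I would unpack the combinatorial meaning of the hypothesis: $G_\F$ contains no copy of the $\vee$-graph if and only if every vertex of $G_\F$ has degree at most one, which is in turn equivalent to saying every connected component of $G_\F$ has order at most $2=2^1$. So we are precisely in the $k=1$ regime of \Cref{main conjecture}.

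Next, I would invoke \Cref{skipless} to replace $\F$ by a skipless family $\F'$ of the same size, the components of $G_{\F'}$ still all having order at most $2$. The crucial observation is then that in a skipless family with components of order at most $2$, every component must be a diamond of height at most $1$: singletons are (degenerate) diamonds of height $0$, and for any two-element component $\{A,B\}$ with $A\subsetneq B$ we must have $|B|-|A|=1$, for otherwise any $Y\in 2^{[n]}$ with $A\subsetneq Y\subsetneq B$ would be a skip of $\F'$ (it lies strictly between two members of $\F'$, and it cannot itself lie in $\F'$ because the component containing $A$ and $B$ has order only $2$).

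Having reduced to the all-diamond setting with height at most $1$, the desired bound
\[
|\F|=|\F'|\le 2\binom{n-1}{\lfloor (n-1)/2\rfloor}
\]
now follows immediately from \Cref{all diamond} applied with $k=1$. There is no substantive obstacle: the equivalence between $\vee$-freeness and the component-order bound is elementary, the skipless reduction is supplied by \Cref{skipless}, and \Cref{diamond BLYM} (through \Cref{all diamond}) does all the real work.
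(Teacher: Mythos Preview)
Your proposal is correct and follows essentially the same route as the paper's own proof: reduce to components of order at most $2$, pass to a skipless family via \Cref{skipless}, observe the components are then diamonds of height at most $1$, and apply \Cref{all diamond} with $k=1$. The only cosmetic quibble is that \Cref{skipless} as stated furnishes a skipless family of size $\La(n,2)$ rather than of size $|\F|$, but since you are proving an upper bound it suffices to treat the extremal case, exactly as the paper does.
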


\begin{proof}
Since the components of $\F$ have order 1 or 2 and by \Cref{skipless} we may assume that $\F$ is skipless, the conditions of \Cref{all diamond} are satisfied for $k=1$. \Cref{all diamond} yields the result.
\end{proof}

In the case $k=2$, if $n$ is even, we can show that the diamond shape is at least as good as any other shape, when it is in the middle layers. This is enough to prove the case $k=2$ of \Cref{main conjecture}. The proof is a bit more involved and requires some case checking.

\begin{theorem}
    If $n$ is even and $\F\subseteq 2^{[n]}$ is such that $G_\F$ has components of order at most 4, then 
    \[
    |\F|\le4\binom{n-2}{n/2-1}.
    \]
\end{theorem}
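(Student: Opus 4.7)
The plan is to follow the proof scheme of \Cref{diamond BLYM} without assuming the components are diamonds, paying a pointwise surcharge for each non-diamond component. First I invoke \Cref{skipless} with $\comporder=4$, so we may assume $\F$ is skipless. The key observation from the proof of the diamond BLYM still goes through: two distinct components of $G_\F$ cannot share a permutation $\sigma\in S_n$, because two initial segments of $\sigma$ lying in $\F$ must be comparable and hence in the same component. Consequently, writing $w(C)=\Ssize(C)/n!$ for each component $C$ of $G_\F$, we obtain
\(\sum_C w(C)\le 1.\)
Given the pointwise inequality
\[
\Ssize(C)\ge\frac{|C|\cdot n(n-1)}{4}\bigl[(n/2-1)!\bigr]^2\qquad\Longleftrightarrow\qquad\frac{|C|}{w(C)}\le 4\binom{n-2}{n/2-1}\tag{$\star$}
\]
for every skipless component $C$ with $|C|\le 4$, the theorem follows by summation: $|\F|=\sum_C|C|\le 4\binom{n-2}{n/2-1}\sum_C w(C)\le 4\binom{n-2}{n/2-1}$.

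The bulk of the argument is then a careful classification of the skipless components of order at most $4$. The complete list should consist of: a singleton; a $2$-chain of height $1$; a V-shape or its dual $\wedge$-shape on two consecutive layers; a height-$2$ diamond; a $3$-fan or a $3$-antifan on two consecutive layers; and a Z-shape (two sets on each of two consecutive layers, joined by three covering relations). The main obstacle lies precisely in this classification step: one must verify carefully that skiplessness genuinely rules out each borderline size-$4$ shape spanning three or more layers, in particular the $3$-chain of height $2$, the Y-shape, the inverted Y-shape, and the $4$-chain, each of which is forced by skiplessness to include an additional missing set and thus grow the component beyond order $4$.

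Once the classification is settled, for each shape $\Ssize(C)$ is computed by inclusion--exclusion on the events $\{\sigma\in S_n : \sigma([|F|])=F\}$ indexed by $F\in C$. Parameterising each shape by the index $i$ of its bottom layer, I expect the identities
\begin{align*}
\Ssize(\text{2-chain}) &= n\cdot i!(n-i-1)!, & \Ssize(\text{V}) &= (n+i)\cdot i!(n-i-1)!,\\
\Ssize(\text{diamond}) &= n(n-1)\cdot i!(n-i-2)!, & \Ssize(\text{3-fan}) &= (n+2i)\cdot i!(n-i-1)!,\\
\Ssize(\text{3-antifan}) &= (3n-2i-2)\cdot i!(n-i-1)!, & \Ssize(\text{Z}) &= (2n-1)\cdot i!(n-i-1)!,
\end{align*}
with the $\wedge$-shape symmetric to V and the singleton contributing $\Ssize=i!(n-i)!$. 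Since $n$ is even, the products $i!(n-i-1)!$ and $i!(n-i-2)!$ are minimised near $i=n/2-1$; plugging these values in reduces $(\star)$ to a one-line elementary inequality in $n$ for each shape. Matching the construction of \Cref{prop:sharp}, $(\star)$ becomes tight exactly for the centred height-$2$ diamond (with $i=n/2-1$), the $3$-fan with $i=n/2-1$, and the $3$-antifan with $i=n/2$, each attaining $\Ssize(C)/|C|=n(n-1)[(n/2-1)!]^2/4$; every other skipless component and position yields a strict inequality.
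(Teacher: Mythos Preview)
Your proposal is correct and follows essentially the same route as the paper: bound $\sum_C\Ssize(C)\le n!$, compute $\Ssize(C)$ by inclusion--exclusion for each possible skipless component shape, minimise over the layer index, and aggregate (the paper phrases the final step as an LP in the counts $c_1,c_2,c_3,c_4$, which is equivalent to your pointwise inequality~$(\star)$).

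You do add one genuine simplification the paper overlooks: if a skipless family contains $X\subset Z$ with $|Z|-|X|=2$, then the whole $4$-element interval $[X,Z]$ lies in $\F$ and hence in the same component, so a skipless component of order $\le 4$ either sits on two consecutive layers or is exactly a diamond. This legitimately eliminates the $3$-chain, the Y-shapes, and the $4$-chain, all of which the paper computes redundantly. Two small cautions for when you write it out: your displayed formula $\Ssize(\text{V})=(n+i)\,i!(n-i-1)!$ is in fact that of the $\wedge$-shape (one bottom, two top); the V-shape (two bottom, one top) gives $(2n-i-1)\,i!(n-i-1)!$. And since several of the prefactors depend on $i$, you must argue that the global minimum over $i$ of each $\Ssize$-expression really lands at the claimed index, not merely that the factorial product does --- the paper handles this by rewriting each expression as a sum of terms each minimised at $k=n/2$.
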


\begin{proof}
    By \Cref{skipless} we may assume that $\F$ is skipless. Let $\F_1,\dots,\F_\ell$ be the components of $\F$. We start again by noting that 
    \begin{equation}
    \label{sum}
    \sum_{i=1}^\ell\Ssize(\F_i)\le n!.
    \end{equation}
    We now try to minimise $\Ssize(\F_i)$, so that we can have as many components as possible. There are a few cases we have to consider to account for different component orders and different shapes. Note that we only consider shapes that are actually different, so we avoid computing the same bound for symmetric shapes (for example the $\vee$-shape and the $\wedge$-shape). In general, for each shape we will use the inclusion--exclusion principle to calculate $\Ssize(\F_i)$, which says
    \begin{align*}
    \Ssize(\F_i)
    &=\sum_{F\in\F_i}\Ssize_1(\{F\})-\sum_{F_1,F_2\in\F_i}\Ssize_2(\{F_1,F_2\})+\sum_{F_1,F_2,F_3\in\F_i}\Ssize_3(\{F_1,F_2,F_3\})-\dots\\
    &=\sum_{F}\Ssize_1(\{F\})-\sum_{F_1\subseteq F_2}\Ssize_2(\{F_1,F_2\})+\sum_{F_1\subseteq F_2\subseteq F_3}\Ssize_3(\{F_1,F_2,F_3\})-\dots
    \end{align*}
    Then, once we obtain the expression in terms of $n$ and $k$ (the layer on which the middle of the component is located), we deduce that the expression is minimised for $k=n/2$. To do this, we use facts about binomials and simple computations of derivatives. We split the different shapes by component order:
    \begin{enumerate}[(i)]
        \item If $\F_i=\{F\}$, then $\Ssize(\F_i)=|F|!(n-|F|)!$, which is minimised when $|F|=n/2$, hence 
        \[
        \Ssize(\F_i)\ge\left(\frac{n}2\right)!\left(\frac{n}2\right)!.
        \]
        \item If $\F_i=\{F_1,F_2\}$, then, since $\F$ is skipless, we may assume that $F_1\subseteq F_2$ with $|F_2|=|F_1|+1$. Then 
        \[
        \Ssize(\F_i)=n|F_1|!(n-|F_2|)!.
        \]
        This is the case $k=1$ of the computation we did for diamond shapes in the proof of \Cref{diamond BLYM}. It is minimised for $|F_1|=n/2$ or $|F_1|=n/2-1$, giving
        \[
        \Ssize(\F_i)\ge n\cdot\left(\frac{n}2\right)!\left(\frac{n}2-1\right)!
        \]
        \item If $\F_i=\{F_1,F_2,F_3\}$ we have two options: 
        \begin{itemize}
            \item $F_1\subseteq F_2\subseteq F_3$ with $\big(|F_1|,|F_2|,|F_3|\big)=(k-1,k,k+1)$. Then
            \begin{align*}
            \Ssize(\F_i)
            &=(k-1)!(n-k+1)!+k!(n-k)!+(k+1)!(n-k-1)!-(k-1)!(n-k)!-\\
            &\qquad\qquad\qquad-k!(n-k-1)!-2(k-1)!(n-k-1)!+(k-1)!(n-k-1)!\\
%            &\\
            &=(k-1)!(n-k-1)!\Big(n^2-nk+k^2-1\Big)%\\&
            \ge \left(\frac{n}2-1\right)!\left(\frac{n}2-1\right)!\left(\frac{3n^2}4-1\right),
            \end{align*}
            since in the second to last line $k=n/2$ minimises both the product of the factorials (since $\binom{n-2}{k-1}$ is maximum for $k=n/2$) and the expression in the parenthesis (this can be checked by computing the derivative with respect to $k$).
            \item $F_1,F_2\subseteq F_3$ with $\big(|F_1|,|F_2|,|F_3|\big)=(k,k,k+1)$. Then
            \begin{align*}
            \Ssize(\F_i)
            &=2k!(n-k)!+(k+1)!(n-k-1)!-2k!(n-k-1)!\\
            &=k!(n-k)!+k!(n-k-1)!(n-1)\\
            &\ge \left(\frac{n}2-1\right)!\left(\frac{n}2-1\right)!\left(\frac{3}4n^2-\frac{3}2n\right),
            \end{align*}
        since both summands in the second line are minimised when $k=n/2$.
        \end{itemize}
        The latter case is slightly smaller.
        \item If $\F_i=\{F_1,F_2,F_3,F_4\}$ we have 5 cases:
        \begin{itemize}
            \item $F_1\subseteq F_2\subseteq F_3\subseteq F_4$ with $\big(|F_1|,|F_2|,|F_3|,|F_4|\big)=(k-1,k,k+1,k+2)$. Then
            \begin{align*}
            \Ssize(\F_i)
            &=(k-1)!(n-k+1)!+k!(n-k)!+(k+1)!(n-k-1)!+(k+2)!(n-k-2)!-\\
            &-(k-1)!(n-k)!-k!(n-k-1)!-(k+1)!(n-k-2)!-2(k-1)!(n-k-1)!-\\
            &-2k!(n-k-2)!-6(k-1)!(n-k-2)!+(k-1)!(n-k-1)!+k!(n-k-2)!+\\
            &+4(k-1)!(n-k-2)!-(k-1)!(n-k-2)!\\
            &=(k-1)!(n-k-2)!\left(n^3-2n^2k-n^2+2nk^2+nk-n+k^2+k-2\right)\\
            &\ge\left(\frac{n}2-1\right)!\left(\frac{n}2-1\right)!\left(n^2+\frac{3}2n+2\right).
            \end{align*}
            This follows by noticing that in the second to last line both the product of factorials and the sum in the parenthesis are minimised for $k=n/2$ (or for $k=n/2-1$ giving the same result).
            \item $F_1,F_2\subseteq F_3\subseteq F_4$, with $\big(|F_1|,|F_2|,|F_3|,|F_4|\big)=(k-1,k-1,k,k+1)$. Then
            \begin{align*}
            \Ssize(\F_i)
            &=2(k-1)!(n-k+1)!+k!(n-k)!+(k+1)!(n-k-1)!-2(k-1)!(n-k)!-\\
            &-k!(n-k-1)!-4(k-1)!(n-k-1)!+2(k-1)!(n-k-1)!\\
            &=(k-1)!(n-k-1)!\left(2n^2-3nk+2k^2-2\right)\\
            &=(k-1)!(n-k)!n+(k-1)!(n-k-1)!(2k^2-2nk+n^2-2)\\
            &\ge \left(\frac{n}2-1\right)!\left(\frac{n}2-1\right)!(n^2-2),
            \end{align*}
            since both summands in the second to last line are minimised for $k=n/2$.
            \item $F_1\subseteq F_2,F_3\subseteq F_4$, with $\big(|F_1|,|F_2|,|F_3|,|F_4|\big)=(k-1,k,k,k+1)$. This is the diamond shape, so we know from the proof of \Cref{diamond BLYM} that
            \[
            \Ssize(\F_i)=n(n-1)(k-1)!(k+1)!\ge \left(\frac{n}2-1\right)!\left(\frac{n}2-1\right)!(n^2-n).
            \]
            \item $F_1,F_2,F_3\subseteq F_4$, with $\big(|F_1|,|F_2|,|F_3|,|F_4|\big)=(k,k,k,k+1)$. Then
            \begin{align*}
            \Ssize(\F_i)
            &=3k!(n-k)!+(k+1)!(n-k-1)!-3k!(n-k-1)!\\
            &=k!(n-k-1)!(3n-2k-2)%\\&
            =2k!(n-k)!+k!(n-k-1)!(n-2)\\
            &\ge \left(\frac{n}2-1\right)!\left(\frac{n}2-1\right)!(n^2-n),
            \end{align*}
            since in the second to last line both summands are minimised for $k=n/2$.
            \item $F_1,F_2\subseteq F_3$ and $F_2\subseteq F_4$, with $\big(|F_1|,|F_2|,|F_3|,|F_4|\big)=(k,k,k+1,k+1)$. Then
            \begin{align*}
            \Ssize(\F_i)
            &=2k!(n-k)!+2(k+1)!(n-k-1)!-3k!(n-k-1)!%\\&
            =k!(n-k-1)!(2n-1)\\
            &\ge\left(\frac{n}2-1\right)!\left(\frac{n}2-1\right)!\left(n^2-\frac{n}2\right).
            \end{align*}
        \end{itemize}
        The third and fourth case both give the lowest bound. This seems to suggest that the optimal family could also be made up of fork-shaped components, that is, $F_1,F_2,F_3\subseteq F_4$, and not just diamond shapes. However, it seems likely that space constraints in the middle layer would make it impossible, and that in reality the diamond shape is the only optimal shape. However we cannot infer it from this proof.
    \end{enumerate}
    For $i=1,2,3,4$, write $c_i$ for the number of components of $\F$ of order $i$. We are maximising
    \begin{equation}
    \label{max}
    |\F|=c_1+2c_2+3c_3+4c_4.
    \end{equation}
    By our previous analysis, we can now replace in \Cref{sum} each $c_i$ with the lower bound we found for each component order:
    \begin{align*}
    &c_1\left(\frac{n}2\right)!\left(\frac{n}2\right)!+c_2n\left(\frac{n}2\right)!\left(\frac{n}2-1\right)!+\\
    &+c_3\left(\frac{n}2-1\right)!\left(\frac{n}2\right)!\left(\frac{3}2n-1\right)+c_4\left(\frac{n}2-1\right)!\left(\frac{n}2-1\right)!(n^2-n)\le n!.
    \end{align*}
    We can in turn rewrite this as
    \begin{align*}
    &c_1\left(\left(\frac{n}2\right)!\left(\frac{n}2\right)!\right)+2c_2\left(\left(\frac{n}2\right)!\left(\frac{n}2\right)!\right)+\\
    &+3c_3\left(\left(\frac{n}2\right)!\left(\frac{n}2-1\right)!\left(\frac{n}2-\frac{1}3\right)\right)+4c_4\left(\left(\frac{n}2\right)!\left(\frac{n}2-1\right)!\left(\frac{n}2-\frac{1}2\right)\right)\le n!
    \end{align*}
    The coefficient of $4c_4$ is the smallest, and so to maximise \Cref{max} we should set $c_1=c_2=c_3=0$ and $c_4$ as large as possible, which gives
    \[
    c_4\le\frac{n!}{\left(\frac{n}2-1\right)!\left(\frac{n}2-1\right)!(n^2-n)}=\binom{n-2}{n/2-1}.
    \]
    This then implies
    \[
    |\F|= 4c_4\le 4\binom{n-2}{n/2-1}.\qedhere
    \]
\end{proof}

We note that this approach breaks down for $k=2$ when $n$ is odd, because then a diamond shape centred on one of the middle layers is no longer the component for which $\Ssize(C)$ is minimised. If we could find a general lower bound on $\Ssize(C)$ this would at least give a weaker version of \Cref{main conjecture}. 

\section{Largest disconnected family}\label{sec:connected}

Rather than bounding component order, in this section we consider instead the largest possible family $\F\subseteq 2^{[n]}$ such that $G_\F$ is disconnected.
If you like, this can be seen as an extreme case of \Cref{main conjecture}, where $k$ is just shy of $n$.
In essence we are asking what is the largest $k$ (now not necessarily an integer) such that the bound on the size of the family is nontrivial, meaning that we can fit more than one component.
As \Cref{disconnected} shows, this $k$ gets arbitrarily close to $n$ as $n\to\infty$.

This proof of \Cref{disconnected} will be quite different from the ones in the previous sections, as it will be more direct. The main idea is to show that, given a family made up of two components, there is a minimum number of sets that cannot be in the family. To make it easier to read, we split the proof into a series of lemmas, which are structured as follows: first we define some necessary notation. \Cref{AB} and \Cref{F+F-} will give necessary preliminaries for \Cref{final lemma}, which encapsulates the main idea of the proof. \Cref{key lemma} is the engine of the proof, as this is where we do most of the work for computing a lower bound on the number of sets that have to be excluded. This, together with the technical \Cref{technical}, will prove \Cref{bound excluded}, which combined with \Cref{final lemma} completes the proof.

We will from now on assume that we are working with some fixed disconnected family $\F\subseteq 2^{[n]}$ that is optimal, i.e.~no sets can be added to $\F$ without making it connected.
    Given a family $\A$, we denote by $\partial^+(\A)$, the set of all supersets of some element of $\A$, that is,
    \[
    \partial^+(\A)=\{X\in 2^{[n]}:A\subseteq X\text{ for some }A\in\A\}.
    \]
    Similarly we define $\partial^-(\A)$ to be the set of all subsets of some element of $\A$, that is,
    \[
    \partial^-(\A)=\{X\in 2^{[n]}:X\subseteq A\text{ for some }A\in\A\}.
    \]
    For a set $A$, we also denote by $\partial(A)$ the set of all subsets of $A$ of size $|A|-1$.

    Suppose now $\F$ is split in two components $\A$ and $\B$, so that there are no comparable pairs $(A,B)$ with $A\in\A$ and $B\in\B$. Then we define the following sets:
    \begin{align*}
    \F^+&=\{F\in 2^{[n]}:F\notin (\partial^-(\A)\cup\partial^-(\B))\text{ and }\nexists F'\subsetneq F\text{ with }F'\notin (\partial^-(\A)\cup\partial^-(\B))\}, \text{ and}\\
    \F^-&=\{F\subseteq[n]:F\notin (\partial^+(\A)\cup\partial^+(\B))\text{ and }\nexists F'\supsetneq F\text{ with }F'\notin (\partial^+(\A)\cup\partial^+(\B))\}.
    \end{align*}

It will be useful to notice the following fact.

\begin{fact}
\label{AB}
    If $\F$ is split into $\A$ and $\B$ as above, then $\partial^+(\A)\cap\partial^-(\A)=\A$ and $\partial^+(\B)\cap\partial^-(\B)=\B$. Also $\partial^+(\A)\cap\partial^-(\B)=\emptyset$ and $\partial^-(\A)\cap\partial^+(\B)=\emptyset$. 
\end{fact}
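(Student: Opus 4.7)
There are four claims to verify. The inclusions $\A\subseteq\partial^+(\A)\cap\partial^-(\A)$ and $\B\subseteq\partial^+(\B)\cap\partial^-(\B)$ are immediate since every set is both a subset and a superset of itself. So the plan is to establish the reverse inclusions together with the two emptiness assertions, using two observations: (i) the extremality hypothesis that $\F$ cannot be enlarged while remaining disconnected, and (ii) the fact that $\A,\B$ are distinct components of $G_\F$ (so no element of $\A$ is comparable to any element of $\B$).

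For the reverse inclusion $\partial^+(\A)\cap\partial^-(\A)\subseteq\A$, I would take any $X\in\partial^+(\A)\cap\partial^-(\A)$, so there exist $A_1,A_2\in\A$ with $A_1\subseteq X\subseteq A_2$. Consider the candidate family $\F\cup\{X\}$. In the graph on this family, $X$ is adjacent to both $A_1$ and $A_2$, so it lies in the component containing $\A$; in particular, no two previously distinct components of $G_\F$ get merged, and so $\F\cup\{X\}$ is still disconnected. By the optimality of $\F$, this forces $X\in\F$, and since $X$ is comparable to $A_1\in\A$ the set $X$ must belong to the component $\A$. The argument for $\partial^+(\B)\cap\partial^-(\B)\subseteq\B$ is identical with the roles of $\A$ and $\B$ interchanged.

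For the two disjointness claims, suppose for contradiction that $X\in\partial^+(\A)\cap\partial^-(\B)$. Then there exist $A\in\A$ and $B\in\B$ with $A\subseteq X\subseteq B$, giving $A\subseteq B$. But then $A$ and $B$ are comparable, so they are adjacent in $G_\F$ and hence lie in the same component; this contradicts $\A\neq\B$. The case $\partial^-(\A)\cap\partial^+(\B)=\emptyset$ is symmetric, reversing the direction of the inclusions.

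The only subtle point (and the only place the extremality of $\F$ enters) is the argument that a set sandwiched between two members of $\A$ must already belong to $\A$; once that is in hand, the remaining assertions are immediate from the definition of a component. I do not anticipate a genuine obstacle.
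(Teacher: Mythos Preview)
Your proof is correct and, for the equalities $\partial^+(\A)\cap\partial^-(\A)=\A$ and $\partial^+(\B)\cap\partial^-(\B)=\B$, actually more careful than the paper's. The paper dismisses these as ``immediate from the definition'', but as you correctly identify, the reverse inclusion genuinely needs the optimality hypothesis: without it one can easily have $A_1\subsetneq X\subsetneq A_2$ with $A_1,A_2\in\A$ and $X\notin\F$. Your argument for the two disjointness claims is identical to the paper's.

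One small point to tighten. When you add $X$ (sandwiched between $A_1,A_2\in\A$) and assert that $\F\cup\{X\}$ stays disconnected, the sentence ``it lies in the component containing $\A$; in particular, no two previously distinct components of $G_\F$ get merged'' is a non sequitur as written: knowing $X$ joins the $\A$-component does not by itself rule out $X$ also being comparable to some $B\in\B$. You should explicitly note that $B\subseteq X$ would give $B\subseteq A_2$ and $X\subseteq B$ would give $A_1\subseteq B$, either way contradicting that $\A$ and $\B$ are distinct components. This is the same observation you use for the disjointness claims, so there is no real obstacle.
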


\begin{proof}
    The first property is immediate from the definition. For the second property, if, for example, there were some set $X\in\partial^+(\A)\cap\partial^-(\B)$, then we would get the chain $A\subseteq X\subseteq B$ for some $A\in\A$ and $B\in\B$, which is a contradiction.
\end{proof}

The definition of $\F^+$ and $\F^-$ may seem quite arbitrary at face value. The idea is that they represent the point where the two components come very close to intersecting. This is illustrated in \Cref{F}. Defining $\F^+$ and $\F^-$ in this way is useful due to the following fact. 

\begin{figure}
    \centering
\definecolor{ududff}{rgb}{0.30196078431372547,0.30196078431372547,1}
\definecolor{uuuuuu}{rgb}{0,0,0}
\begin{tikzpicture}[line cap=round,line join=round,>=triangle 45,x=0.45cm,y=0.45cm]
\clip(-17.05,-10.34) rectangle (2,3.6);
\draw [line width=2pt,color=uuuuuu] (-10.75,3.41)-- (-16.91,-2.76);
\draw [line width=2pt,color=uuuuuu] (-16.91,-2.76)-- (-10.75,-8.93);
\draw [line width=2pt,color=uuuuuu] (-10.75,-8.93)-- (-4.58,-2.76);
\draw [line width=2pt,color=uuuuuu] (-4.58,-2.76)-- (-10.75,3.41);
\draw [line width=2pt,color=uuuuuu] (-1.69,-0.89)-- (-3.56,-2.76);
\draw [line width=2pt,color=uuuuuu] (-3.56,-2.76)-- (-1.69,-4.64);
\draw [line width=2pt,color=uuuuuu] (-1.69,-4.64)-- (0.19,-2.76);
\draw [line width=2pt,color=uuuuuu] (0.19,-2.76)-- (-1.69,-0.89);
\draw (-4.72,-2) node[anchor=north west] {$\dots$};
\draw (-4.72,-2.9) node[anchor=north west] {$\dots$};
\draw (-11.24,-2.25) node[anchor=north west] {$\mathcal{A}$};
\draw (-2.2,-2.25) node[anchor=north west] {$\mathcal{B}$};
\draw (-4.72,-1) node[anchor=north west] {$\mathcal{F}^+$};
\draw (-4.72,-3.2) node[anchor=north west] {$\mathcal{F}^-$};
\begin{scriptsize}
\draw [fill=ududff] (-10.75,3.41) circle (2.5pt);
\draw [fill=ududff] (-16.91,-2.76) circle (2.5pt);
\draw [fill=ududff] (-10.75,-8.93) circle (2.5pt);
\draw [fill=ududff] (-4.58,-2.76) circle (2.5pt);
\draw [fill=ududff] (-1.69,-0.89) circle (2.5pt);
\draw [fill=ududff] (-3.56,-2.76) circle (2.5pt);
\draw [fill=ududff] (-1.69,-4.64) circle (2.5pt);
\draw [fill=ududff] (0.19,-2.76) circle (2.5pt);
\end{scriptsize}
\end{tikzpicture}
\vspace{-15pt}
    \caption{The sets $\F^+$ and $\F^-$.}
    \label{F}
\end{figure}

\begin{fact}
\label{F+F-}
    If $F\subseteq X$ for some $F\in\F^+$, then $X\notin\F$. If $X\subseteq F$ for some $F\in\F^-$, then $X\notin\F$.
\end{fact}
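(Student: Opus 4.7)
The plan is to observe that both statements reduce to a direct contradiction with the first (non-minimality) clause in the definitions of $\F^+$ and $\F^-$. For the first statement, I would suppose toward a contradiction that some $X \in \F$ has $F \subseteq X$ with $F \in \F^+$. Since $\F = \A \cup \B$, the set $X$ lies in at least one of the two components, so without loss of generality $X \in \A$. Then $F \subseteq X$ puts $F$ into $\partial^-(\A)$, and in particular into $\partial^-(\A) \cup \partial^-(\B)$. But this directly contradicts the defining requirement $F \notin \partial^-(\A) \cup \partial^-(\B)$ in the definition of $\F^+$.

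The second statement follows from the same argument with the roles of sub- and super-sets swapped. Namely, if $X \in \F$ satisfies $X \subseteq F$ for some $F \in \F^-$, then picking a component (say $\A$) containing $X$ yields $F \in \partial^+(\A) \subseteq \partial^+(\A) \cup \partial^+(\B)$, contradicting the defining condition $F \notin \partial^+(\A) \cup \partial^+(\B)$ in the definition of $\F^-$.

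There is really no obstacle here: the fact is essentially just a rephrasing of the ``outer'' clause in the definitions of $\F^+$ and $\F^-$. The ``minimality'' clauses ($\nexists F' \subsetneq F$ with $F' \notin \partial^-(\A) \cup \partial^-(\B)$, and the dual) are not needed for this particular fact; I expect them to be invoked only in the subsequent lemmas (notably \Cref{key lemma} and \Cref{final lemma}), where one needs $\F^+$ and $\F^-$ to sit as close as possible to $\A \cup \B$ in order to obtain a nontrivial lower bound on the number of sets excluded from $\F$.
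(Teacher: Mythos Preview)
Your proof is correct and follows essentially the same approach as the paper's: both argue that if $X\in\F$ then $X$ lies in $\A$ or $\B$, forcing $F$ into $\partial^-(\A)\cup\partial^-(\B)$ (respectively $\partial^+(\A)\cup\partial^+(\B)$), contradicting the defining clause of $\F^+$ (respectively $\F^-$). The paper phrases it by first showing $X\notin\partial^-(\A)\cup\partial^-(\B)$ and then noting $X\in\F$ would contradict this, while you go directly by contradiction from $X\in\F$; the logical content is identical.
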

\begin{proof}
    Suppose $F\subseteq X$ for some $F\in\F^+$. Then $X\notin\partial^-(\A)$ and $X\notin\partial^-(\B)$. If this was not the case then also $F$ would be in either $\partial^-(\A)$ or $\partial^-(\B)$, which is a contradiction. But if $X$ were in $\F$, then it would be in either $\A$ or $\B$, and hence it would be in either $\partial^-(\A)$ or $\partial^-(\B)$. We conclude that $X\notin\F$. The proof of the second statement is analogous.
\end{proof}

This means that we can use $\F^+$ and $\F^-$ to lower bound the number of sets that cannot be in $\F$. More specifically we get the following lemma.

\begin{lemma}
\label{final lemma}
    If $\F$ consists of two components $\A$ and $\B$, then 
    \(
    |\F|\le 2^n-|\partial^+(\F^+)|-|\partial^-(\F^-)|.
    \)
\end{lemma}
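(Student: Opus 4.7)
The plan is to show that $\F$, $\partial^+(\F^+)$, and $\partial^-(\F^-)$ are three pairwise disjoint subsets of $2^{[n]}$; the bound then follows simply by summing their cardinalities.

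Disjointness of $\F$ from $\partial^+(\F^+)\cup\partial^-(\F^-)$ is handed to us by \Cref{F+F-}. The real content is showing $\partial^+(\F^+)\cap \partial^-(\F^-)=\emptyset$, and this is where I expect to invoke the optimality hypothesis on $\F$. The key preliminary observation I would record is the following consequence of optimality and \Cref{AB}: since adding any $X\notin \F$ to $\F$ must connect $\A$ to $\B$, the set $X$ must be comparable to a member of each of $\A$ and $\B$; the two ``mixed'' cases $X\in \partial^-(\A)\cap\partial^+(\B)$ and $X\in \partial^+(\A)\cap\partial^-(\B)$ are ruled out by \Cref{AB}, so every $X\notin \F$ lies in $(\partial^-(\A)\cap\partial^-(\B))\cup(\partial^+(\A)\cap\partial^+(\B))$.

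With this in hand, I would argue by contradiction: suppose some $X$ belongs to $\partial^+(\F^+)\cap\partial^-(\F^-)$, witnessed by $F^+\in \F^+$ and $F^-\in \F^-$ with $F^+\subseteq X\subseteq F^-$, so in particular $F^+\subseteq F^-$. Since $\F\subseteq \partial^-(\A)\cup \partial^-(\B)$ but $F^+\notin \partial^-(\A)\cup\partial^-(\B)$ by definition of $\F^+$, we have $F^+\notin \F$. Applying the above dichotomy to $F^+$, and using again that $F^+\notin \partial^-(\A)\cup \partial^-(\B)$ to kill the first case, we force $F^+\in \partial^+(\A)\cap \partial^+(\B)$; in particular there exists $A\in \A$ with $A\subseteq F^+$. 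But then $A\subseteq F^+\subseteq F^-$ gives $F^-\in \partial^+(\A)$, directly contradicting $F^-\in \F^-\subseteq 2^{[n]}\setminus(\partial^+(\A)\cup \partial^+(\B))$.

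The main obstacle, I expect, is exactly the step above: seeing that optimality, not just disconnectedness, is the hypothesis that forces $F^+$ and $F^-$ to ``collide.'' Once the three-way disjointness is established, the bound $|\F|\le 2^n-|\partial^+(\F^+)|-|\partial^-(\F^-)|$ is immediate from $|\F|+|\partial^+(\F^+)|+|\partial^-(\F^-)|=|\F\cup\partial^+(\F^+)\cup\partial^-(\F^-)|\le 2^n$.
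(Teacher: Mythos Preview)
Your proposal is correct and follows essentially the same approach as the paper: show that $\F$, $\partial^+(\F^+)$, and $\partial^-(\F^-)$ are pairwise disjoint, using \Cref{F+F-} for the first two disjointness conditions and the optimality of $\F$ together with \Cref{AB} for $\partial^+(\F^+)\cap\partial^-(\F^-)=\emptyset$. The only cosmetic difference is that the paper derives the contradiction directly at the putative intersection point $X$ (showing $X$ is incomparable to all of $\F$, so could be added), whereas you first package optimality as a general dichotomy for every $X\notin\F$ and then apply it to $F^+$; both routes are equally short and valid.
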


\begin{proof}
    By \Cref{F+F-}, we know that none of the sets in $\partial^+(\F^+)$ or in $\partial^-(\F^-)$ can be in $\F$. The only thing left to observe is that $\partial^+(\F^+)\cap\partial^-(\F^-)=\emptyset$. This is by optimality of $\F$. Indeed, if there is some $F\in\partial^+(\F^+)\cap\partial^-(\F^-)$, then in particular $F\notin (\partial^-(\A)\cup\partial^-(\B))$ and $F\notin (\partial^+(\A)\cup\partial^+(\B))$. But then $F$ isn't comparable with any set in $\F$, hence we could add $F$ to $\F$, which is a contradiction since we assumed $\F$ to be optimal. Thus $\partial^+(\F^+)\cap\partial^-(\F^-)=\emptyset$, proving the lemma.
\end{proof}

We  now lower bound  $|\partial^+(\F^+)|+|\partial^-(\F^-)|$. First we observe $\F^+$ and $\F^-$ are nonempty.

\begin{claim}
    If $\F$ is disconnected, then $\F^+\ne\emptyset$ and $\F^-\ne\emptyset$.
\end{claim}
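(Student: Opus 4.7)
The plan is to supply easy witnesses to the non-emptiness of $\F^+$ and $\F^-$ by observing that in any disconnected $\F$, neither $[n]$ nor $\emptyset$ can belong to $\F$, and then taking extremal elements with respect to inclusion.

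First I would show that $[n] \notin \F$. Indeed, if $[n]$ were in $\F$, then since $X \subseteq [n]$ for every $X \in \F$, the vertex $[n]$ would be adjacent in $G_\F$ to every other vertex, forcing $G_\F$ to be connected. By the same argument applied to $\emptyset$, which is a subset of every set, we get $\emptyset \notin \F$. Next I would translate these observations into statements about $\partial^-$ and $\partial^+$. The set $[n]$ is contained only in itself, so $[n] \in \partial^-(\A) \cup \partial^-(\B)$ would require $[n] \in \A \cup \B = \F$, which we just ruled out; hence $[n] \notin \partial^-(\A) \cup \partial^-(\B)$. Symmetrically, $\emptyset \notin \partial^+(\A) \cup \partial^+(\B)$.

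Finally, I would conclude by picking extremal elements. The collection of sets not in $\partial^-(\A) \cup \partial^-(\B)$ is a nonempty subfamily of the finite poset $(2^{[n]}, \subseteq)$, and any inclusion-minimal element of it lies in $\F^+$ by the very definition of $\F^+$; so $\F^+ \neq \emptyset$. Symmetrically, any inclusion-maximal element of the nonempty collection of sets not in $\partial^+(\A) \cup \partial^+(\B)$ is an element of $\F^-$, giving $\F^- \neq \emptyset$. There is no real obstacle; the only care needed is the observation at the two extreme ranks that $[n]$ (respectively $\emptyset$) lies in $\partial^-(\A) \cup \partial^-(\B)$ (respectively $\partial^+(\A) \cup \partial^+(\B)$) if and only if it lies in $\F$ itself.
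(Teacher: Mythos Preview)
Your proposal is correct and takes essentially the same approach as the paper: both arguments observe that $[n]\notin\partial^-(\A)\cup\partial^-(\B)$ (because $[n]\notin\F$ by disconnectedness), then produce an inclusion-minimal element of the complement of $\partial^-(\A)\cup\partial^-(\B)$ to witness $\F^+\ne\emptyset$, and argue symmetrically for $\F^-$. The only cosmetic difference is that the paper phrases the minimality step as descending along a chain from $[n]$ one layer at a time, whereas you invoke the existence of minimal elements in a finite poset directly.
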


\begin{proof}
    We have $\emptyset\notin\F$ and $[n]\notin\F$, or else $\F$ would be connected. But now notice the following facts:
    \begin{enumerate}[(i)]
        \item $[n]\notin (\partial^-(\A)\cup\partial^-(\B))$, otherwise we would have $[n]\in\F$.
        \item $\emptyset\in(\partial^-(\A)\cup\partial^-(\B))$, since both components contain at least one set.
    \end{enumerate}
    Now if all proper subsets of $[n]$ are in either $\partial^-(\A)$ or $\partial^-(\B)$, then $[n]\in\F^+$ and we are done. Otherwise, we construct a chain of subsets $[n]\supseteq X_{n-1}\supseteq X_{n-2}\supseteq\dots\supseteq X_k$ such that each next subset has one fewer element than the previous and for each $k\le i\le n$ we have $X_i\notin(\partial^-(\A)\cup\partial^-(\B))$. Since $\emptyset\in(\partial^-(\A)\cup\partial^-(\B))$, the chain must stop at some point, i.e.~eventually we find a set $X_k$ such that $X_k\notin(\partial^-(\A)\cup\partial^-(\B))$ but all proper subsets of $X_k$ are in either $\partial^-(\A)$ or $\partial^-(\B)$. Then $X_k\in\F^+$.
    
    We can use the same reasoning for $\F^-$ starting at the empty set, and we will eventually have to find some set in $\F^-$.
\end{proof}

Now we can use the following lemma to show both that $\F^+$ and $\F^-$ have a certain minimum size, and that they contain sets that are very close to each other in size.

\begin{lemma}
\label{key lemma}
    If $F\in\F^+$, then $\F^-$ must contain at least $|F|-1$ sets, each of size at least $|F|-2$. Similarly, if $F\in\F^-$, then $\F^+$ must contain at least $n-|F|-1$ sets, each of size at most $|F|+2$.
\end{lemma}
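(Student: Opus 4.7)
First, some preliminaries. By the minimality in the definition of $\F^+$, each $F_x:=F\setminus\{x\}$ lies in $\partial^-(\A)\cup\partial^-(\B)$, so some $A\in\A\cup\B$ satisfies $F_x\subseteq A$. Since $F\not\subseteq A$ and $|F_x|=|F|-1$, we must have $A\cap F=F_x$, hence $x\notin A$. Moreover, by the optimality of $\F$, the set $F$ itself lies in $\partial^+(\A)\cup\partial^+(\B)$: otherwise $F$ is incomparable with every element of $\F$, so $\F\cup\{F\}$ would still be disconnected, contradicting the optimality of $\F$.

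The plan is to associate to each $x\in F$ a distinct set $T_x\in\F^-$ of size at least $|F|-2$, with at most one exception. Write $k:=|F|$. For the \emph{default} case $F_x\notin\partial^+(\A)\cup\partial^+(\B)$, I take $T_x$ to be a maximal set with $F_x\subseteq T_x\subseteq[n]\setminus\{x\}$ and $T_x\notin\partial^+(\A)\cup\partial^+(\B)$, so $|T_x|\ge k-1$. To verify $T_x\in\F^-$: for any $z\in[n]\setminus(T_x\cup\{x\})$, maximality gives $T_x\cup\{z\}\in\partial^+(\A)\cup\partial^+(\B)$, while $T_x\cup\{x\}\supseteq F\in\partial^+(\A)\cup\partial^+(\B)$. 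For the \emph{exceptional} case $F_x\in\partial^+(\A)\cup\partial^+(\B)$, I pick $A'\in\A\cup\B$ with $A'\subseteq F_x$ and some $y\in A'$, and take $T_x$ maximal subject to $T_x\subseteq[n]\setminus\{x\}$, $F\setminus\{x,y\}\subseteq T_x$, and $T_x\notin\partial^+(\A)\cup\partial^+(\B)$, yielding $|T_x|\ge k-2$ and, by the same verification, $T_x\in\F^-$.

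For distinctness, if $T_x=T_{x'}$ for two default choices with $x\ne x'$, then $T_x\supseteq F_x\cup F_{x'}=F\in\partial^+(\A)\cup\partial^+(\B)$, a contradiction. In the mixed and both-exceptional configurations, the sets $T_x,T_{x'}$ avoid $x,x'$ respectively, and the same kind of argument derives a contradiction except in the ``swap'' case where the auxiliary element $y$ used for $x$ equals $x'$ and simultaneously the auxiliary used for $x'$ equals $x$; at most one such pair of constructions can coincide, leaving at least $k-1$ distinct $T_x\in\F^-$ of size $\ge k-2$. The symmetric statement for $F\in\F^-$ then follows by applying the entire argument to the complementary family $\{[n]\setminus G:G\in\F\}$, under which $\partial^-$ and $\partial^+$ are interchanged and $\F^+$ and $\F^-$ are swapped.

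The main obstacle will be the exceptional case, specifically ensuring the construction is well-defined: the auxiliary $y$ must be chosen so that $F\setminus\{x,y\}$ itself lies outside $\partial^+(\A)\cup\partial^+(\B)$. When several distinct members of $\A\cup\B$ lie below $F_x$, removing a single $y$ may not suffice, and one must invoke additional structure (the minimality in $\F^+$ combined with the optimality and disconnectedness of $\F$) to restrict how these subsets of $F$ inside $\A\cup\B$ can be arranged.
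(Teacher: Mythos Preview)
Your proposal is genuinely incomplete, and you have been honest about this in your final paragraph. The exceptional case is indeed a real obstruction: once $F_x\in\partial^+(\A)\cup\partial^+(\B)$, one can deduce (combining minimality of $F$ in $\F^+$ with \Cref{AB}) that $F_x$ must actually lie in $\A\cup\B$, say $F_x\in\A$. Then $F\setminus\{x,y\}\notin\partial^+(\B)$ automatically, but $F\setminus\{x,y\}\notin\partial^+(\A)$ requires $y$ to lie in \emph{every} $A''\in\A$ with $A''\subseteq F_x$, and nothing prevents this intersection from being empty. Your proposed fix (``invoke additional structure'') is not supplied. A secondary gap: the claim that ``at most one such pair of constructions can coincide'' is not justified; several disjoint swap pairs $(x_1,x_2),(x_3,x_4),\dots$ could each produce a collision, and you would then fall below $k-1$.

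The paper's route bypasses both difficulties by exploiting the two-sidedness of the partition $\A,\B$ from the outset rather than treating the sets $F_x$ uniformly. It first shows (\Cref{one in A one in B}) that $\partial(F)$ contains at least one set in $\partial^-(\A)\setminus\partial^-(\B)$ and at least one in $\partial^-(\B)\setminus\partial^-(\A)$. Then for any pair $F_1\in\partial(F)\cap(\partial^-(\A)\setminus\partial^-(\B))$ and $F_2\in\partial(F)\cap\partial^-(\B)$, the set $F_1\cap F_2$ is \emph{automatically} outside $\partial^+(\A)\cup\partial^+(\B)$: if some $A\in\A$ had $A\subseteq F_1\cap F_2$, then $F_2\in\partial^+(\A)\cap\partial^-(\B)$, contradicting \Cref{AB}, and symmetrically for $\B$. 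No delicate choice of an auxiliary element is needed. The count of $k-1$ distinct members of $\F^-$ is then obtained by a careful selection of $k-1$ pairs $(F_1,F_2)$ satisfying explicit combinatorial conditions that force the resulting elements of $\F^-$ to be pairwise distinct. In short, the paper uses the separation property $\partial^+(\A)\cap\partial^-(\B)=\emptyset$ as the engine, whereas your construction tries to do without it and gets stuck precisely where that separation would have been decisive.
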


In order to prove the lemma, we first prove another claim.

\begin{claim}
\label{one in A one in B}
    If $F\in\F^+$, then $\partial(F)$ contains at least one set in $\partial^-(\A)\backslash\partial^-(\B)$ and at least one set in $\partial^-(\B)\backslash\partial^-(\A)$.
\end{claim}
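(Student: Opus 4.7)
The plan is to combine the optimality of $\F$ with \Cref{AB} and the minimality clause built into the definition of $\F^+$. The first observation is that $F\notin\F$, since otherwise $F\in\A\cup\B\subseteq\partial^-(\A)\cup\partial^-(\B)$, contradicting $F\in\F^+$. Because $\F$ is assumed to be optimal, adjoining $F$ to $\F$ must destroy its disconnectedness, and in particular $F$ must be comparable with at least one $A\in\A$ and at least one $B\in\B$ (if $F$ were comparable with nothing in, say, $\B$, then $\F\cup\{F\}$ would still split with $\A$'s part and $\B$ in different components, contradicting optimality). Since $F\notin\partial^-(\A)\cup\partial^-(\B)$, neither comparability can take the form $F\subseteq A$ or $F\subseteq B$, so in fact $A\subsetneq F$ and $B\subsetneq F$.

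From this foothold, the two desired witnesses in $\partial(F)$ drop out. Picking any $i\in F\setminus A$, the set $F\setminus\{i\}$ still contains $A$, so it lies in $\partial^+(\A)$; then \Cref{AB} forbids it from being in $\partial^-(\B)$. On the other hand $F\setminus\{i\}$ is a proper subset of $F$, so the minimality clause in the definition of $\F^+$ forces $F\setminus\{i\}\in\partial^-(\A)\cup\partial^-(\B)$, and combining the two we obtain $F\setminus\{i\}\in\partial^-(\A)\setminus\partial^-(\B)$. The symmetric argument using $j\in F\setminus B$ yields an element of $\partial(F)\cap(\partial^-(\B)\setminus\partial^-(\A))$.

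The only nontrivial ingredient is the optimality step that locates proper subsets of $F$ in both $\A$ and $\B$; the rest is bookkeeping via \Cref{AB} (separating $\partial^+(\A)$ from $\partial^-(\B)$, and vice versa) and via the minimality in the definition of $\F^+$ (which guarantees that every element of $\partial(F)$ is already covered by $\partial^-(\A)\cup\partial^-(\B)$). No case analysis, skiplessness, or information about $\F^-$ is needed here, which keeps the proof short and explains why the claim is phrased purely in terms of $\F^+$.
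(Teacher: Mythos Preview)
Your proof is correct and uses essentially the same ingredients as the paper's: optimality of $\F$ to locate proper subsets $A\in\A$ and $B\in\B$ of $F$, the minimality clause in the definition of $\F^+$ to place every element of $\partial(F)$ in $\partial^-(\A)\cup\partial^-(\B)$, and \Cref{AB} to separate the two sides. The only difference is cosmetic: the paper argues by contradiction (assuming $\partial(F)\subseteq\partial^-(\A)$ and deriving that some $F'\in\partial(F)$ lies in $\partial^+(\B)\cap\partial^-(\A)$), whereas you construct the witnesses $F\setminus\{i\}$ and $F\setminus\{j\}$ directly.
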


\begin{proof}
    Assume $\partial(F)\subseteq\partial^-(\A)$. We know that $F\notin\partial^-(\A)$ and $F\notin\partial^-(\B)$. Then we must have $F\in\partial^+(\B)$, otherwise we could safely add $F$ to $\A$, which breaks optimality of $\F$. But this is a contradiction: if $F\in\partial^+(\B)$, then in particular $F'\in\partial^+(\B)$ for some $F'\subseteq F$ with $|F'|=|F|-1$. But we assumed that $F'\in\partial^-(\A)$, so we get a contradiction by \Cref{AB}. So we cannot have $\partial(F)\subseteq\partial^-(\A)$, and similarly we cannot have $\partial(F)\subseteq\partial^-(\B)$. But since each set in $\partial(F)$ is in either $\partial^-(\A)$ or $\partial^-(\B)$, we deduce that $\partial(F)$ must contain at least one set in $\partial^-(\A)\backslash\partial^-(\B)$ and at least one set in $\partial^-(\B)\backslash\partial^-(\A)$.
\end{proof}

\begin{proof}[Proof of \Cref{key lemma}]
    We only show the first statement, as the second is analogous. So let $F_1\in\partial F\cap(\partial^-(\A)\backslash\partial^-(\B))$ and let $F_2\in\partial F\cap\partial^-(\B)$. We can do this by \Cref{one in A one in B}. Now consider the set $F_1\cap F_2$. First, notice that $|F_1\cap F_2|=|F|-2$. We claim that $F_1\cap F_2\notin\partial^+(\A)$ and $F_1\cap F_2\notin\partial^+(\B)$. If it were in $\partial^+(\A)$, then $F_2$ would also be in $\partial^+(\A)$, which is a contradiction by \Cref{AB} since $F_2\in\partial^-(\B)$. Similarly if $F_1\cap F_2$ were in $\partial^+(\B)$, then $F_1$ would also be in $\partial^+(\B)$, which is a contradiction since $F_1\in\partial^-(\A)$. So we deduce that $F_1\cap F_2\notin (\partial^+(\A)\cup\partial^+(\B))$, which implies that either it is in $\F^-$, or it has a superset in $\F^-$.

    So we have found a set in $\F^-$ of size at least $|F|-2$. To show that there must actually be at least $|F|-1$ such sets, we observe that we could actually have picked multiple pairs $(F_1,F_2)$ in the beginning of the proof and by the same reasoning obtain more sets in $\F^-$. However, it could be that the same set in $\F^-$ corresponds to multiple pairs $(F_1,F_2)$. It turns out that we can still find at least $|F|-1$ pairs that each lead to distinct sets in $\F^-$. To do this, we pick a set of $\ell$ distinct pairs $\{(F_1,F_2)^{(1)},\dots,(F_1,F_2)^{(\ell)}\}$, with $F_1^{(i)}\in\partial F\cap\partial^-(\A)$ and $F_2^{(i)}\in\partial F\cap\partial^-(\B)$ for all $i$, such that for any two pairs $(F_1,F_2)^{(i)}$ and $(F_1,F_2)^{(j)}$ one of the following holds:
    \begin{enumerate}[(1)]
        \item\label{itm:distinct} $F_1^{(i)},F_1^{(j)},F_2^{(i)},F_2^{(j)}$ are all distinct,
        \item\label{itm:F1i} $F_1^{(i)}=F_1^{(j)}$ and $F_1\in\partial F\cap(\partial^-(\A)\backslash\partial^-(\B))$, or
        \item\label{itm:F2i} $F_2^{(i)}=F_2^{(j)}$ and $F_2\in\partial F\cap(\partial^-(\B)\backslash\partial^-(\A))$.
    \end{enumerate}

    If we can find $\ell$ pairs satisfying these conditions, then each pair will lead to a distinct set $F^{(i)}\in\F^-$. We show this for each of the three cases:

    \begin{enumerate}
        \item[\eqref{itm:distinct}] From $F_1^{(i)}\cap F_2^{(i)}$ and $F_1^{(j)}\cap F_2^{(j)}$ we derive sets $F^{(i)},F^{(j)}\in\F^-$ as described previously. Now assume $F^{(i)}=F^{(j)}$. Since the four sets $F_1^{(i)},F_2^{(i)},F_1^{(j)},F_2^{(j)}$ are all distinct and they are all subsets of $F$ of size $|F|-1$, we deduce that each is missing a distinct element of $F$, thus $F_1^{(i)}\cap F_2^{(i)}$ is missing two elements of $F$, which must be contained in $F_1^{(j)}\cap F_2^{(j)}$. But then $(F_1^{(i)}\cap F_2^{(i)})\cup(F_1^{(j)}\cap F_2^{(j)})=F$. So since $F^{(i)}\supseteq F_1^{(i)}\cap F_2^{(i)}$ and $F^{(j)}\supseteq F_1^{(j)}\cap F_2^{(j)}$, we must have $F^{(i)}=F^{(j)}\supseteq F$. But this is a contradiction, as then (by definition of $\F^-$) we would have $F\notin(\partial^+(\A)\cup\partial^+(\B))$, but we also know that $F\notin(\partial^-(\A)\cup\partial^-(\B))$ (since $F\in\F^+$). This contradicts the optimality of $\F$, as we could safely add $F$ to $\F$. It follows that $F^{(i)}\ne F^{(j)}$.
        \item[\eqref{itm:F1i}] Like before, assume $F^{(i)}=F^{(j)}$. This time we have $(F_1^{(i)}\cap F_2^{(i)})\cup(F_1^{(j)}\cap F_2^{(j)})=F_1^{(i)}=F_1^{(j)}$. Therefore $F^{(i)}=F^{(j)}\supseteq F_1^{(i)}=F_1^{(j)}$. But this is also a contradiction, since then we would have $F_1^{(i)}\notin \partial^+(\B)\cup\partial^+(\A)$ (by definition of $\F^-$), but we also have $F_1^{(i)}\notin\partial^-(\B)$ by assumption. This contradicts the optimality of $\F$, as we could safely add $F_1^{(i)}$ to $\A$.
        \item[\eqref{itm:F2i}] This is similar to~\eqref{itm:F1i}, with the roles of $\A$ and $\B$ inverted.
    \end{enumerate}

    Therefore it only remains to show that there exists a set of pairs of size $\ell\ge |F|-1$ satisfying conditions \eqref{itm:distinct}--\eqref{itm:F2i}. We can achieve this by choosing
    \begin{itemize}
        \item all pairs $(F_1,F_2)$ with $F_1\in\partial^-(\A)\backslash\partial^-(\B)$ and $F_2\in\partial^-(\B)\backslash\partial^-(\A)$, and
        \item a pair $(F_1,F_2)$ for each $F_2\in\partial^-(\A)\cap\partial^-(\B)$, with $F_1$ being any set in $\partial^-(\A)\backslash\partial^-(\B)$. 
    \end{itemize}
    
    In total we get
    \begin{align*}
    \ell
    &=|\partial^-(\A)\backslash\partial^-(\B)|\cdot|\partial^-(\B)\backslash\partial^-(\A)|+|\partial^-(\A)\cap\partial^-(\B)|\\
    &\ge|F|-|\partial^-(\A)\cap\partial^-(\B)|-1+|\partial^-(\A)\cap\partial^-(\B)|=|F|-1.
    \end{align*}
    The inequality holds because each of $\partial^-(\A)\backslash\partial^-(\B)$ and $\partial^-(\B)\backslash\partial^-(\A)$ has at least one element, and $|\partial^-(\A)\backslash\partial^-(\B)|+|\partial^-(\B)\backslash\partial^-(\A)|=|F|-|\partial^-(\A)\cap\partial^-(\B)|$, the worst case being therefore when $|\partial^-(\A)\backslash\partial^-(\B)|=1$ and $|\partial^-(\B)\backslash\partial^-(\A)|=|F|-|\partial^-(\A)\cap\partial^-(\B)|-1$ (or vice versa).

    Since each of the $\ell$ pairs leads to a distinct set in $\F^-$ with size at least $|F|-2$, this concludes the proof of the first part of the lemma. The argument for the second statement is exactly the same where we invert the roles of subset and superset.
\end{proof}

This lemma is sufficient to lower bound the size of $|\partial^+(\F^+)|+|\partial^-(\F^-)|$. For this, we use a technical lemma which is essentially a corollary of the Kruskal--Katona theorem (\Cref{thm:KrKa}).

\begin{lemma}
\label{technical}
If $S$ is a set of $k+1$ sets of size at least $k$, then $\partial^-(S)\ge 2^{k+1}-1$.

If $S'$ is a set of $k$ sets of size at least $k$, then $\partial^-(S')\ge 2^{k+1}-2$.

We also have the counterparts for supersets, that is, if $S$ is a set of $n-k+1$ sets of size at most $k$, then $\partial^+(S)\ge 2^{n-k+1}-1$, and, if $S'$ is a set of $n-k$ sets of size at most $k$, then $\partial^+(S')\ge 2^{n-k+1}-2$. 
\end{lemma}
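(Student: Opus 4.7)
The plan is to prove the four assertions by first reducing to two. The superset statements follow from the subset statements via the involution $F \mapsto [n]\setminus F$, which exchanges $\partial^-$ and $\partial^+$ and turns ``size at least $k$'' into ``size at most $n-k$''. So I only need to prove the two claims about $\partial^-$.

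For each of these, the approach is a clean case split on whether the collection contains any set of size strictly greater than $k$. If some $F \in S$ has $|F| \geq k+1$, then $\partial^-(S) \supseteq \partial^-(\{F\}) = 2^F$, so $|\partial^-(S)| \geq 2^{|F|} \geq 2^{k+1}$, which comfortably beats both claimed bounds. This disposes of every mixed-size configuration immediately, and reduces matters to the case that all sets in $S$ lie on level $k$.

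In that substantive case, I apply Kruskal--Katona (Theorem~\ref{thm:KrKa}) level by level to the antichain at layer $k$, then sum the shadow-size lower bounds over $r = 0, 1, \ldots, k$. For the first assertion, $|S| = k+1 = \binom{k+1}{k}$ gives a single-term K--K decomposition, so $|\mathcal{S}_r| \geq \binom{k+1}{k-r}$ at every level, and summing telescopes to $\sum_{j=0}^{k} \binom{k+1}{j} = 2^{k+1} - 1$. For the second assertion, $|S'| = k$ has K--K decomposition $k = \binom{k}{k} + \binom{k-1}{k-1} + \cdots + \binom{1}{1}$; after a hockey-stick manipulation this gives $|\mathcal{S}_r| \geq \binom{k+1}{k-r}$ for $r \geq 1$, while $|\mathcal{S}_0| = k$ holds trivially, summing to exactly $2^{k+1} - 2$.

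The only mildly delicate step is this second K--K computation: hockey-stick applied blindly would produce $\binom{k+1}{k-r}$ at every level, but at $r = 0$ this would over-count by $1$ (the K--K bound becomes $k+1$ while the family itself has only $k$ sets). One must therefore separate off the $r = 0$ term before telescoping, and this is precisely why the second bound reads $2^{k+1} - 2$ rather than $2^{k+1} - 1$. Everything else is a routine application of Kruskal--Katona and complementation.
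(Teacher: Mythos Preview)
Your proof is correct and follows essentially the same route as the paper's: reduce to the case that every set lies on layer $k$, apply Kruskal--Katona with the cascade representations $k+1=\binom{k+1}{k}$ and $k=\binom{k}{k}+\binom{k-1}{k-1}+\cdots+\binom{1}{1}$, and sum the shadow bounds over all levels. Your treatment of the reduction step (dispatching any set of size $\ge k+1$ directly via $|\partial^-(\{F\})|\ge 2^{k+1}$) and your hockey-stick simplification for the $S'$ case are a bit more explicit than the paper's, but the underlying argument is identical.
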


\begin{proof}
    The proof works by adding up the bound given by Kruskal--Katona layer by layer. Clearly if the sets in $S$ and $S'$ are larger than $k$, then the number of subsets is even larger, so we may assume they all have size exactly $k$. Conveniently, $|S|=\binom{k+1}k$, therefore by Kruskal--Katona there are at least $\binom{k+1}i$ subsets of elements of $S$ on layer $i$, for $0\le i\le k$. If we add these up, we get in total
    \[
    \partial^-(S)\ge \sum_{i=0}^k\binom{k+1}i=2^{k+1}-1.
    \]

    For $S'$, we have $|S'|=\binom{k}k+\binom{k-1}{k-1}+\dots+\binom{1}1$. Hence there are at least
    \[
    \sum_{j=0}^{i-1}\binom{k-j}{k-i-j}
    \]
    subsets of elements of $S'$ on layer $i$. If we add these up, we get in total
    \[
    \partial^-(S')\ge\sum_{i=0}^k\sum_{j=0}^{i-1}\binom{k-j}{k-i-j}=\sum_{j=0}^{k-1}\sum_{i=0}^{k-j}\binom{k-j}i=\sum_{j=0}^{k-1}2^{k-j}\ge
    2^{k+1}-2.
    \]
    The proof for the converse is analogous, and it uses an analogous version of Kruskal--Katona for lower bounding the number of supersets instead of subsets.
\end{proof}

The technical lemma will be useful in the proof of this final lemma.

\begin{lemma}
\label{bound excluded}
\[
|\partial^+(\F^+)|+|\partial^-(\F^-)| \ge
\begin{cases}
2^{n/2+1}-2 & \text{ if $n$ is even, and}\\
3\cdot 2^{(n-1)/2}-2 &  \text{ if $n$ is odd.}
\end{cases}
\]
\end{lemma}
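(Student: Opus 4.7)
Let $F^+ \in \F^+$ and $F^- \in \F^-$ realise the minimum size $f$ in $\F^+$ and the maximum size $g$ in $\F^-$. By the key lemma (Lemma~\ref{key lemma}), $\F^-$ contains at least $f-1$ sets of size at least $f-2$ and $\F^+$ contains at least $n-g-1$ sets of size at most $g+2$; in particular $g \ge f-2$. Feeding these into the technical lemma (Lemma~\ref{technical}) yields
\[
|\partial^-(\F^-)| \ge 2^{f-1} - 1 \quad\text{and}\quad |\partial^+(\F^+)| \ge 2^{n-g-1} - 1,
\]
which I combine with the trivial single-set bounds $|\partial^+(\F^+)| \ge 2^{n-f}$ and $|\partial^-(\F^-)| \ge 2^g$.

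My plan is to split into cases on the gap $g - f$. The main case, realised by the extremal construction, is $g = f - 2$: here the key-lemma/technical bounds dominate and add to $|\partial^+(\F^+)| + |\partial^-(\F^-)| \ge 2^{n-f+1} + 2^{f-1} - 2$. Minimising this over integer $f$ --- the minimum sits at $f = (n+2)/2$ for even $n$ and at $f \in \{(n+1)/2, (n+3)/2\}$ for odd $n$ --- yields exactly $2^{n/2+1} - 2$ or $3 \cdot 2^{(n-1)/2} - 2$ respectively, matching the target.

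In the other case $g \ge f - 1$, the trivial bounds already give $2^{n-f} + 2^g$, which beats the target whenever $g \ge f$, leaving only the delicate subcase $g = f - 1$. Here the naive bound $2^{n-f} + 2^{f-1}$ can fall as low as $3 \cdot 2^{n/2-1}$. To recover the missing amount, I would exploit the antichain property of $\F^-$: each of the $f - 1$ key-lemma sets $K_i \in \F^-$ produced from $F^+$ is incomparable to $F^-$ and so contains an element $x_i \notin F^-$. The subsets of $K_i$ containing $x_i$ then lie outside $\partial^-(\{F^-\})$ and contribute at least $2^{|K_i|-1} \ge 2^{f-3}$ new subsets per $K_i$; a careful sum across the $f - 1$ sets gives a bonus of order $(f-1)2^{f-3}$, which is exactly enough to close the gap.

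The hard part will be this bonus-counting step in the subcase $g = f - 1$. When the outside elements $x_i$ of different $K_i$'s coincide, the per-set contributions overlap, so one must argue either from the pair construction $(F_1^{(i)}, F_2^{(i)})$ inside the proof of the key lemma that the $x_i$'s are forced to be distinct, or more robustly by applying Kruskal--Katona to $\bigcup_i \partial^-(\{K_i\}) \cup \partial^-(\{F^-\})$ to show its size still exceeds $2^{f-1}$ by the required bonus. A symmetric argument takes care of the contribution to $|\partial^+(\F^+)|$, and the odd-$n$ bound follows by the same template.
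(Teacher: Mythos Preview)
Your overall framework matches the paper's: take the smallest $F\in\F^+$ of size $f$, invoke the key lemma to find many sets in $\F^-$ of size at least $f-2$, and then combine trivial single-set bounds with the technical lemma. The case $g=f-2$ is fine, and the case $g\ge f$ is fine. The genuine gap is exactly where you flag it: the subcase $g=f-1$.

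Your proposed ``bonus-counting'' fix is unnecessary and, as stated, not complete. You already note that the outside elements $x_i$ may coincide, which kills the naive $(f-1)2^{f-3}$ count, and the suggested Kruskal--Katona repair is only gestured at. The paper avoids all of this by splitting your $g=f-1$ subcase further on the \emph{specific} $f-1$ sets produced by the key lemma:
\begin{itemize}
\item If one of those sets has size exactly $f-2$, apply the key lemma \emph{again}, this time in the other direction, to that set. This yields at least $n-(f-2)-1=n-f+1$ sets in $\F^+$ of size at most $f$, and now the first part of the technical lemma (superset version) gives $|\partial^+(\F^+)|\ge 2^{n-f+1}-1$. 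Together with $|\partial^-(\F^-)|\ge 2^{f-1}-1$ you already had, this reproduces the $2^{n-f+1}+2^{f-1}-2$ bound from your main case.
\item If all $f-1$ of those sets have size exactly $f-1$, then the \emph{second} statement of the technical lemma applies directly (with $k=f-1$): $f-1$ sets of size at least $f-1$ give $|\partial^-(\F^-)|\ge 2^{f}-2$. Combined with the trivial $|\partial^+(\F^+)|\ge 2^{n-f}$, this gives $2^{n-f}+2^f-2$, whose minimum over $f$ is exactly the target.
\end{itemize}
So the missing idea is twofold: (a) reapply the key lemma to a size-$(f-2)$ set when one exists, and (b) when none exists, use the second clause of the technical lemma rather than the trivial bound $2^{f-1}$. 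Both are tools you already have in hand; no antichain bonus argument is needed.
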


\begin{proof}
    Let $F$ be the smallest set in $\F^+$, with $|F|=k$. By \Cref{key lemma}, there are at least $k-1$ sets in $\F^-$, each of size at least $k-2$. Suppose any of these sets had size at least $k$. Then we would have
    \[
    |\partial^+(\F^+)|+|\partial^-(\F^-)|\ge 2^{n-k}+2^{k}.
    \]
    This is minimised by $2^{n/2+1}>2^{n/2+1}-2$ when $n$ is even and by $3\cdot 2^{(n-1)/2}>3\cdot 2^{(n-1)/2}-2$ when $n$ is odd.

    So from now on we may assume that there are at least $k-1$ sets in $\F^-$ of sizes $k-2$ and $k-1$. Suppose there is one of size $k-2$. Then by \Cref{key lemma}, there are at least $n-k+1$ sets in $\F^+$ of size at most $k$. Then by \Cref{technical}, we must have
    \[
    |\partial^+(\F^+)|+|\partial^-(\F^-)|\ge 2^{n-k+1}-1+2^{k-1}-1.
    \]
    This is minimised with $k=n/2$ if $n$ is even and $k=(n+1)/2$ if $n$ is odd, giving exactly the bound of this lemma.

    The only case we are left with is when all the at least $k-1$ sets in $\F^-$ we found have size $k-1$. Then, again by \Cref{technical}, we have
    \[
    |\partial^+(\F^+)|+|\partial^-(\F^-)|\ge 2^{n-k}+2^k-2,
    \]
    which is minimised with $k=n/2$ if $n$ is even and $k=(n-1)/2$ if $n$ is odd, giving once more the exact bound of this lemma. This concludes the proof.
\end{proof}

Now we finally have all the ingredients to prove \Cref{disconnected}.

\begin{proof}[Proof of \Cref{disconnected}]
    By \Cref{final lemma,bound excluded}, we deduce that
    \[
    |\F| \le 2^n-|\partial^+(\F^+)|-|\partial^-(\F^-)|\le
    \begin{cases}
2^n-2^{n/2+1}+2 & \text{ if $n$ is even, and}\\
2^n-3\cdot 2^{(n-1)/2}+2 &  \text{ if $n$ is odd.}
\end{cases}
    \]
Sharpness is witnessed by the family $\F=\A\cup\B$ with $\A=\{[n/2]\}$ if $n$ is even and $\A=\{[(n-1)/2]\}$ if $n$ is odd, and $\B=\{B\in 2^{[n]}:B\nsubseteq A\text{ and }A\nsubseteq B \text{ for any }A\in\A\}$.
\end{proof}

\section{Concluding remarks}

Recall the narrative we began with in the introduction.
As we cross the cardinality threshold $\La(n)=\binom{n}{\lfloor n/2\rfloor}$, how does the space of $2$-chains in a subfamily of the Boolean lattice evolve?
In this work, we interpreted this rough question in terms of largest component in the comparability graphs $G_\F$ of subfamilies $\F \subseteq 2^{[n]}$,
 as we tune the prescribed lower bound on $|\F|$ beyond $\La(n)$.
Because of the middle layer family $\binom{[n]}{\lfloor n/2\rfloor}$, $G_\F$ may consist of a disjoint union of isolated vertices unless we prescribe that $|\F| > \La(n)$.
Our main result (\Cref{main}) implies the following statement.
Given $\varepsilon>0$, if $\F \subset 2^{[n]}$ is a family satisfying $|\F| \ge (1+\varepsilon)\La(n)$, then $G_\F$ must have a component of order $\exp(\Omega(n/\log n))$.
Qualitatively, this already shows how $\La(n)$ constitutes a sharp threshold with respect to the component structure of the comparability graph, in that up to cardinality $\La(n)$ it can be all isolated vertices, while a modest $1+\varepsilon$ factor above $\La(n)$ cardinality guarantees a strikingly large component.

Our main conjecture (\Cref{main conjecture}) proposes that the size guarantee $\exp(\Omega(n/\log n))$ above could be improved to $\exp(\Omega(n))$.
(Consider the Stirling's approximation computations displayed parenthetically after the conjecture.)
This improvement would in fact follow if we could establish that $\mad^\star(\comporder,\mathcal{O}^\star)= O(\log \comporder)$, which is an open problem mentioned in~\cite{ABSZZ+}, or, more specifically, if we could show $\xi^\star_n(\comporder)=O(\log \comporder)$, which is related to \Cref{question 5.1} and is an interesting problem in its own right.
We contend that the special case $k=n-2$ of \Cref{main conjecture} alone is enticing, even only asymptotically.
Moreover, \Cref{main conjecture} suggests that the size of a largest component will be of order $2^{n-o(n)}$ once we prescribe that $|\F| = \omega(\La(n))$.
Thus, the broader picture sketched (if correct) compares interestingly against the intuition we might have borrowed from, say, the theory of random graphs or percolation.

As a side remark, we note how the study of such `condensation' phenomena has been pursued quite intensively in stochastic, algorithmic contexts, see e.g.~\cite{AcCo08}, but to our knowledge ---and to our surprise--- not systematically  in the context of (extremal) combinatorics. In a parallel work~\cite{BHK+}, Buys, van den Heuvel and the second author establish a similar (but qualitatively distinct) phenomenon with respect to Tur\'an's theorem.

The study undertaken here not only links nicely to many classic studies, but also raises numerous interesting research questions. The first author's thesis contains an account of several of these. % (see \protect\url{https://scripties.uba.uva.nl/search?id=record_54316}). 
To conclude the paper, we highlight just one, which takes an alternative, and perhaps more authentic, interpretation of our original motivating question.

Given a positive integer $\compsize$, how large can $\F$ be before $G_\F$  must contain some component of {\em size} greater than $\compsize$?
That is, instead of measuring the number of elements of $\F$ it spans, we measure the number of edges (i.e.~comparable pairs) a component of $G_\F$ has.
While this alternative interpretation is related to the problems we tackled in this paper, it does not seem that we can straightforwardly apply the same techniques.
Nevertheless, we ask, could the disjoint union of complete subfamilies be extremal for this question?

\subsection*{Open access statement} For the purpose of open access,
a CC BY public copyright license is applied
to any Author Accepted Manuscript (AAM)
arising from this submission.

\bibliographystyle{abbrv}
\bibliography{references}

\begin{thebibliography}{10}

\bibitem{AcCo08}
D.~Achlioptas and A.~Coja{-}Oghlan.
\newblock Algorithmic barriers from phase transitions.
\newblock In {\em 49th Annual {IEEE} Symposium on Foundations of Computer
  Science, {FOCS} 2008, October 25-28, 2008, Philadelphia, PA, {USA}}, pages
  793--802. {IEEE} Computer Society, 2008.

\bibitem{ABSZZ+}
N.~Alon, M.~Buci\'{c}, L.~Sauermann, D.~Zakharov, and O.~Zamir.
\newblock Essentially tight bounds for rainbow cycles in proper
  edge-colourings, 2023.

\bibitem{ADGS15}
N.~Alon, S.~Das, R.~Glebov, and B.~Sudakov.
\newblock Comparable pairs in families of sets.
\newblock {\em J. Combin. Theory Ser. B}, 115:164--185, 2015.

\bibitem{AlFr85}
N.~Alon and P.~Frankl.
\newblock The maximum number of disjoint pairs in a family of subsets.
\newblock {\em Graphs Combin.}, 1(1):13--21, 1985.

\bibitem{BaWa18}
J.~Balogh and A.~Z. Wagner.
\newblock Kleitman's conjecture about families of given size minimizing the
  number of {$k$}-chains.
\newblock {\em Adv. Math.}, 330:229--252, 2018.

\bibitem{Bol65}
B.~Bollob\'{a}s.
\newblock On generalized graphs.
\newblock {\em Acta Math. Acad. Sci. Hungar.}, 16:447--452, 1965.

\bibitem{BHK+}
P.~Buys, J.~van~den Heuvel, and R.~J. Kang.
\newblock Condensation in {T}ur{\'{a}}n's theorem.
\newblock Manuscript in preparation.

\bibitem{DGS15}
S.~Das, W.~Gan, and B.~Sudakov.
\newblock Sperner's theorem and a problem of {E}rd{\H{o}}s, {K}atona and
  {K}leitman.
\newblock {\em Combin. Probab. Comput.}, 24(4):585--608, 2015.

\bibitem{DLS13}
S.~Das, C.~Lee, and B.~Sudakov.
\newblock Rainbow {T}ur\'{a}n problem for even cycles.
\newblock {\em European J. Combin.}, 34(5):905--915, 2013.

\bibitem{DaFr83}
D.~E. Daykin and P.~Frankl.
\newblock On {K}ruskal's cascades and counting containments in a set of
  subsets.
\newblock {\em Mathematika}, 30(1):133--141, 1983.

\bibitem{DGKS14}
A.~P. Dove, J.~R. Griggs, R.~J. Kang, and J.-S. Sereni.
\newblock Supersaturation in the {B}oolean lattice.
\newblock {\em Integers}, 14A:Paper No. A4, 7, 2014.

\bibitem{Jan23}
O.~Janzer.
\newblock Rainbow {T}ur\'{a}n number of even cycles, repeated patterns and
  blow-ups of cycles.
\newblock {\em Israel J. Math.}, 253(2):813--840, 2023.

\bibitem{JaSu24}
O.~Janzer and B.~Sudakov.
\newblock On the {T}ur\'{a}n number of the hypercube.
\newblock {\em Forum Math. Sigma}, 12:Paper No. e38, 19, 2024.

\bibitem{Kat68}
G.~Katona.
\newblock A theorem of finite sets.
\newblock In {\em Theory of {G}raphs ({P}roc. {C}olloq., {T}ihany, 1966)},
  pages 187--207. Academic Press, New York-London, 1968.

\bibitem{KaTa83}
G.~O.~H. Katona and T.~G. Tarj\'{a}n.
\newblock Extremal problems with excluded subgraphs in the {$n$}-cube.
\newblock In {\em Graph theory (\L ag\'{o}w, 1981)}, volume 1018 of {\em
  Lecture Notes in Math.}, pages 84--93. Springer, Berlin, 1983.

\bibitem{KMSV07}
P.~Keevash, D.~Mubayi, B.~Sudakov, and J.~Verstra\"{e}te.
\newblock Rainbow {T}ur\'{a}n problems.
\newblock {\em Combin. Probab. Comput.}, 16(1):109--126, 2007.

\bibitem{KLLT24}
J.~Kim, J.~Lee, H.~Liu, and T.~Tran.
\newblock Rainbow cycles in properly edge-colored graphs.
\newblock {\em Combinatorica}, 44(4):909--919, 2024.

\bibitem{Kle68}
D.~Kleitman.
\newblock A conjecture of {E}rd{\H{o}}s-{K}atona on commensurable pairs among
  subsets of an {$n$}-set.
\newblock In {\em Theory of {G}raphs ({P}roc. {C}olloq., {T}ihany, 1966)},
  pages 215--218. Academic Press, New York-London, 1968.

\bibitem{Kru63}
J.~B. Kruskal.
\newblock The number of simplices in a complex.
\newblock In {\em Mathematical optimization techniques}, pages 251--278. Univ.
  California Press, Berkeley-Los Angeles, Calif., 1963.

\bibitem{Lub66}
D.~Lubell.
\newblock A short proof of {S}perner's lemma.
\newblock {\em J. Combinatorial Theory}, 1:299, 1966.

\bibitem{Mes63}
L.~D. Me\v{s}alkin.
\newblock A generalization of {S}perner's theorem on the number of subsets of a
  finite set.
\newblock {\em Teor. Verojatnost. i Primenen.}, 8:219--220, 1963.

\bibitem{Sam19}
W.~Samotij.
\newblock Subsets of posets minimising the number of chains.
\newblock {\em Trans. Amer. Math. Soc.}, 371(10):7259--7274, 2019.

\bibitem{Spe28}
E.~Sperner.
\newblock Ein {S}atz \"{u}ber {U}ntermengen einer endlichen {M}enge.
\newblock {\em Math. Z.}, 27(1):544--548, 1928.

\bibitem{Tom24}
I.~Tomon.
\newblock Robust (rainbow) subdivisions and simplicial cycles.
\newblock {\em Adv. Comb.}, pages Paper No. 1, 37, 2024.

\bibitem{Yam54}
K.~Yamamoto.
\newblock Logarithmic order of free distributive lattice.
\newblock {\em J. Math. Soc. Japan}, 6:343--353, 1954.

\end{thebibliography}

\end{document}